\newtheorem{prop}{Proposition}
\newtheorem{theorem}[prop]{Theorem}
\newtheorem{lemma}[prop]{Lemma}
\newtheorem{conj}[prop]{Conjecture}
\newtheorem*{keyobs}{Key Observation}
\newtheorem*{fact}{Fact}
\newcommand{\Q}{\mathbb Q}
\newcommand{\A}{\mathbb A}
\newcommand{\Sym}{\mathrm{Sym}}
\newcommand{\Ree}{\mathrm{Re}}
\newcommand{\den}{\mathrm{den}}
\newcommand{\Fr}{\mathrm{Fr}}
\DeclareMathOperator{\GL}{GL}
\DeclareMathOperator{\Gal}{Gal}
\DeclareMathOperator{\tr}{tr}
\DeclareMathOperator{\Ad}{Ad}
\begin{document}

\title[Strong local--global phenomena for representations]{Strong local--global phenomena for Galois and automorphic representations}
\date{\today}

\author{Kimball Martin}

\address{Department of Mathematics, University of Oklahoma, Norman, OK 73019 USA}
%\address[Current address]{Department of Mathematics, Graduate School of Science, 
 %Osaka City University, Sugimoto 3--3--138, Sumiyoshi, Osaka~558--8585, JAPAN}

%\thanks{The author was partially supported by a Simons Collaboration Grant and JSPS Invitation Fellowship}

\begin{abstract}
Many results are known regarding how much local information is required to determine
a global object, such as a modular form, or a Galois or  automorphic representation.
We begin by surveying some things that are known and expected, and then explain recent
joint work with Dinakar Ramakrishnan  about comparing degree 2 Artin and automorphic
representations which {\em a priori} may not correspond at certain infinite sets of places.
\end{abstract}

\maketitle

These notes are based on a talk I gave at the RIMS workshop, ``Modular forms and
automorphic representations,'' from Feb 2--6, 2015, which was in turn based
on the joint work \cite{MR} with Ramakrishnan.  I am grateful to the organizers
for the opportunity to present this exposition.  

These notes were written while I was visiting
Osaka City University under a JSPS Invitation Fellowship.  I was also supported in part by
a Simons Collaboration Grant.  I am happy to thank all of these organizations for their kind support.
I would also like to thank Christina Durfee for enlightening discussions about characters of finite
groups and Nahid Walji for helpful feedback.

\section{Introduction}
A local--global principle, or phenomenon, is a situation where certain
local conditions are sufficient to imply a corresponding global condition.  
Examples both of local--global principles (e.g., zeroes of quadratic forms, norms in cyclic
extensions, {\em Grunwald}--Wang, splitting of central simple algebras) 
as well as 
examples of failures of local--global principles (e.g., unique factorization, Grunwald--\emph{Wang},
points on varieties, zeros or poles of $L$-functions, 
vanishing of periods) abound in number theory and are of consummate interest.
See, for example, Mazur's (8th out of 11 so far!) {\em Bulletin} article \cite{mazur:1993} for 
local--global principles and obstructions for varieties.

On the other hand, for certain objects like idele class characters,
modular (new) forms, Galois representations or 
automorphic representations, we have much more rigid local--global phenomena.
Here the usual local--global principle is more-or-less tantamount to the existence of an Euler product
for the associated $L$-function.  We will discuss stronger versions of this, where
knowing local $L$-factors at a sufficiently large set of places determines the global
$L$-function (and hence, often, the global object up to isomorphism).

Specifically, consider the following 3 results.  Let $F$ be a number field, $\Sigma_F$
 the set of places of $F$ and $\Gamma_F$ the absolute Galois group of $F$.  
Denote by $\rho, \rho'$ irreducible $n$-dimensional complex representations 
of $\Gamma_F$ (i.e., irreducible Artin representations)
and by $\pi, \pi'$ irreducible cuspidal automorphic representations of $\GL_n(\A_F)$.

\begin{enumerate}
\item If $L(s, \rho_v) = L(s, \rho'_v)$ for almost all $v$, then $L(s, \rho) = L(s, \rho')$, and in fact
$\rho \simeq \rho'$.

\item If $L(s, \pi_v) = L(s, \pi'_v)$ for almost all $v$, then $L(s, \pi) = L(s, \pi')$,
and in fact $\pi \simeq \pi'$.

\item If $L(s, \rho_v) = L(s, \pi_v)$ for almost all $v$, then $L(s, \rho) = L(s, \pi)$.
\end{enumerate}

To be more precise, by the notation $L(s, \rho)$, $L(s, \pi)$, etc., for global $L$-functions we will
mean the incomplete $L$-function (the product over all finite places of local factors).  When we
want to denote completed $L$-functions, we will write $L^*(s, \rho)$, $L^*(s, \pi)$, etc.  
By equality of two global $L$-functions, we mean as Euler products over the base field, i.e., not
just equality of meromorphic functions but all local factors are equal as well.

Result (1) is an elementary consequence of Chebotarev density, and 
(2) is the strong multiplicity one (SMO) theorem for $\GL(n)$ due to Jacquet and Shalika \cite{JS}.  
Result (3) follows from an argument of Deligne and Serre \cite{DS} (see
Appendix A of my thesis \cite{martin}).

While the first two statements are usually stated just 
with the conclusion of the two representations being isomorphic, stating the conclusion in terms of a
global $L$-function equality puts all three results on the same footing.  In addition, if one wants to
think about representations of other groups, this seems to be the right point of view.
E.g., cuspidal representations of $\mathrm{SO}_n(\A)$ will not satisfy SMO in the usual sense, but equality
at almost all places should give an equality of global $L$-functions (in fact, $L$-packets).
\medskip

Now one can ask a more general type of question.  
Suppose two global $L$-functions over $F$ agree at all primes outside
of some set $S \subset \Sigma_F$.  Under what conditions can we conclude that the $L$-factors
are equal everywhere?
The above 3 results are about when $S$ is a finite set, but some results and conjectures exist
generalizing (1) and (2) if $S$ is ``not too big'', or of a certain form.  We will discuss each of
these situations, and conclude by explaining recent joint work with
Ramakrishnan \cite{MR}, where we generalized (3) to certain kinds of infinite sets for $n=2$.

Of course it is interesting to consider when $\rho$ and $\rho'$ are $\ell$-adic Galois representations
as well.  We will make some remarks about $\ell$-adic representations, but for simplicity focus
on Artin representations.

\section{Galois representations}

Suppose $\rho$ and $\rho'$ are irreducible $n$-dimensional Artin representations of $\Gamma_F$.
They both factor through Galois groups of finite extensions of $F$, so we can choose a single 
finite Galois extension $K/F$ such that $\rho$ and $\rho'$ may be considered as representations of 
$G = \Gal(K/F)$.  

Our basic problem is to determine if knowing $L^S(s, \rho) = L^S(s, \rho')$ for some fixed $S \subset
\Sigma_F$ implies $L(s, \rho) = L(s, \rho')$.
Since a representation of a finite group is determined by its character, for Artin representations
it suffices to consider a weaker hypothesis.  

Namely, let $S \subset \Sigma_F$ and suppose
$\tr \rho(Fr_v) = \tr \rho'(Fr_v)$ for $v \not \in S$.  (We may assume $S$ contains all places where 
$\rho$ and $\rho'$ are ramified, so that this makes sense.)  Note this is weaker than the condition 
on $L$-factors because, at unramified places,
$L_v(s, \rho) = (\det(I-\rho(\Fr_v)q_v^{-s}))^{-1}$ determines $\tr \rho(\Fr_v)$ but not conversely.

Recall we define the (natural) density of $S$ to be
\[ \den(S) := \lim_{x \to \infty} \frac{ \# \{ v \in S : q_v < x \} } { \# \{ v \in \Sigma_F : q_v < x \} }, \]
if this limit exists.  Now Chebotarev density 
says that if $\den(S) < \frac 1{|G|}$, then
$\{ \Fr_v : v \not \in S \}$ hits all conjugacy classes in $G$.  So if $\den(S) < \frac 1{|G|}$, then
$\tr \rho(g) = \tr \rho'(g)$ for all $g \in G$, whence $\rho \simeq \rho'$.  Often it is easier
to work with Dirichlet density, which is defined by
\[ \delta(S) := \lim_{s\to 1^+} \frac{ \sum q_v^{-s} }{\log \frac 1{s-1}} \]
If $\den(S)$ exists, so does $\delta(S)$ and they are equal.  

\begin{prop} Suppose $\rho$ and $\rho'$ are $n$-dimensional Artin representations of 
$\Gal(K/F)$.  If $\tr \rho(\Fr_v) = \tr \rho'(\Fr_v)$ for $v$ outside of a set $S$ of places with
$\delta(S) < \frac 1{2n^2}$, then $\rho \simeq \rho'$.
\label{prop:gal-SMO}
\end{prop}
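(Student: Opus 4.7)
The plan is to argue via the virtual character $\chi = \tr\rho - \tr\rho'$ of $G = \Gal(K/F)$, using Chebotarev density together with an inner product estimate. Crucially, since the proposition does not assume $\rho, \rho'$ are irreducible, I must work with virtual characters and extract the conclusion from $\chi = 0$ (which gives $\rho \simeq \rho'$ because Artin representations of a finite group are determined by their characters).

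First I would set up the density bound. Let $T = \{g \in G : \chi(g) \neq 0\}$. This is a conjugation-invariant subset of $G$, so it is a union of conjugacy classes. By the Chebotarev density theorem applied class-by-class, the set $\{v \in \Sigma_F : \Fr_v \in T\}$ has Dirichlet density exactly $|T|/|G|$. But the hypothesis says $\Fr_v \notin T$ for every $v \notin S$, so this set is contained in $S$, and hence
\[ \frac{|T|}{|G|} \le \delta(S) < \frac{1}{2n^2}. \]

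Next I would run the $L^2$ bound on $\chi$. Since $|\tr\rho(g)|, |\tr\rho'(g)| \le n$ for all $g$, we have $|\chi(g)|^2 \le 4n^2$, and $\chi$ vanishes off $T$, so
\[ \langle \chi, \chi \rangle = \frac{1}{|G|} \sum_{g \in T} |\chi(g)|^2 \le \frac{|T|}{|G|} \cdot 4n^2 < 2. \]
Writing $\rho = \bigoplus n_i \sigma_i$ and $\rho' = \bigoplus n_i' \sigma_i$ with the $\sigma_i$ the distinct irreducible representations of $G$, we get $\langle \chi, \chi \rangle = \sum_i (n_i - n_i')^2$, a non-negative integer. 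Hence $\langle \chi, \chi \rangle \in \{0, 1\}$.

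The only remaining obstacle is to rule out $\langle \chi, \chi \rangle = 1$, and this is where the equal-dimension hypothesis becomes essential. If the inner product equaled $1$, then $n_i - n_i' = \pm 1$ for exactly one index $i_0$ and is zero otherwise, so $\chi = \pm \tr \sigma_{i_0}$. Evaluating at the identity, this forces $\chi(1) = \pm \dim \sigma_{i_0} \neq 0$; but $\chi(1) = \dim \rho - \dim \rho' = n - n = 0$, a contradiction. Therefore $\langle \chi, \chi \rangle = 0$, so $\chi \equiv 0$ on $G$, and the characters of $\rho$ and $\rho'$ coincide, giving $\rho \simeq \rho'$. The main subtlety, really, is noticing that an irreducibility assumption is not needed because the $\chi(1) = 0$ identity precisely kills the borderline case $\langle \chi, \chi \rangle = 1$ that the density bound $\tfrac{1}{2n^2}$ (rather than the stronger $\tfrac{1}{4n^2}$) leaves open.
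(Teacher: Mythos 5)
Your proof is correct, and it takes a genuinely different route from the paper's. The paper isolates a lemma about two \emph{irreducible} degree-$n$ characters $\chi, \chi'$: writing $Y$ for the set where they disagree, it shows $\sum_{g\in Y}|\chi\bar\chi|$ and $\sum_{g\in Y}|\chi\bar\chi'|$ are each less than $|G|/2$ in absolute value, concludes $\sum_{g\in G}\chi\bar\chi'\ne 0$, and then invokes orthogonality of irreducibles to get $\chi=\chi'$; Chebotarev then supplies the bound on $|Y|/|G|$ exactly as you supply the bound on $|T|/|G|$. You instead work with the single virtual character $\chi=\tr\rho-\tr\rho'$, bound $\langle\chi,\chi\rangle$ from above by $4n^2\,|T|/|G|<2$, and use the integrality $\langle\chi,\chi\rangle=\sum_i(n_i-n_i')^2\in\Z_{\ge 0}$ together with $\chi(1)=0$ to rule out the borderline value $1$. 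Your version buys you something real: it dispenses with the irreducibility hypothesis (which the Proposition's wording does not actually impose, though the paper's lemma needs it), and the role of the threshold $1/(2n^2)$ versus $1/(4n^2)$ is made transparent by the $\chi(1)=0$ step. The paper's version is slightly more direct when irreducibility is in force, since it needs no separate argument to eliminate the norm-$1$ case. Both are clean and both are sharp in light of the Buzzard--Edixhoven--Taylor/Serre examples mentioned afterward.
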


This follows from combining the above Chebotarev density argument and the following result about finite group characters.

\begin{lemma} If $\chi$ and $\chi'$ are irreducible characters of degree $n$ of a finite group $G$
and $X = \{ g \in G : \chi(g) = \chi'(g) \}$ has size $> |G|(1-1/2n^2)$, then $\chi = \chi'$.
\end{lemma}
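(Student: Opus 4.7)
The plan is to argue by contradiction using the standard Schur orthogonality relations for the inner product of characters. Suppose $\chi \neq \chi'$ and set $Y = G \setminus X$; the hypothesis on $|X|$ becomes $|Y| < |G|/(2n^2)$. I want to extract a contradiction from the fact that $\chi - \chi'$ has large $L^2$-norm but is supported (as a difference) on the small set $Y$.

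More concretely, I would consider the virtual character $\chi - \chi'$ and compute
\[
\langle \chi - \chi', \chi - \chi' \rangle = \frac{1}{|G|} \sum_{g \in G} |\chi(g) - \chi'(g)|^2.
\]
Since $\chi, \chi'$ are irreducible and distinct, Schur orthogonality gives $\langle \chi, \chi \rangle = \langle \chi', \chi' \rangle = 1$ and $\langle \chi, \chi' \rangle = 0$, so the left-hand side equals $2$. On the other hand, the summand vanishes for $g \in X$, so the sum is really over $g \in Y$.

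Next I would use the crude pointwise bound $|\chi(g)|, |\chi'(g)| \leq n$ (since both are degree-$n$ characters and thus values are sums of $n$ roots of unity), which gives $|\chi(g) - \chi'(g)|^2 \leq 4n^2$ for every $g$. Combining,
\[
2 = \frac{1}{|G|} \sum_{g \in Y} |\chi(g) - \chi'(g)|^2 \leq \frac{4n^2 |Y|}{|G|},
\]
so $|Y|/|G| \geq 1/(2n^2)$, contradicting $|Y| < |G|/(2n^2)$. Hence $\chi = \chi'$.

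I do not expect a real obstacle here: everything reduces to Schur orthogonality plus the trivial bound $|\chi(g)| \leq n$. The only mild subtlety is making sure the numerology matches the statement's constant $1/(2n^2)$; this is exactly what the factor of $4n^2$ produces against the $\langle \chi - \chi', \chi - \chi' \rangle = 2$ on the other side, and it is what forces the factor $2$ in the denominator rather than, say, $4$. If one wanted a sharper constant one would have to improve the pointwise bound (e.g.\ noting that not both $\chi(g)$ and $\chi'(g)$ can simultaneously attain $\pm n$ unless $g$ is central), but for the stated inequality the naive bound suffices.
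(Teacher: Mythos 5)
Your proof is correct and rests on exactly the same ingredients as the paper's (Schur orthogonality plus the trivial pointwise bound $|\chi(g)| \le n$), arriving at the same sharp constant $1/(2n^2)$. The only cosmetic difference is in the packaging: you compute $\langle \chi-\chi', \chi-\chi'\rangle = 2$ and bound it above by $4n^2|Y|/|G|$, whereas the paper splits $\langle \chi, \chi'\rangle$ over $X$ and $Y$ and shows it is nonzero; both are the same estimate in mildly different clothing.
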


\begin{proof} Put $Y = G - X$.  Since $\chi, \chi'$ have maximum absolute value $n$, we see
\[ \sum_{g \in Y} |\chi(g) \bar \chi(g)|, \sum_{g \in Y} |\chi(g) \bar \chi'(g)| \le |Y|n^2 < \frac{|G|}2. \]
Since $\sum_{g \in G} \chi(g) \bar \chi(g) = |G|$, this means $\sum_{g \in X} \chi(g) \bar \chi(g) \ge
\frac{G}2$.  Then
\[ \sum_{g \in G} \chi(g) \bar \chi(g)  = \sum_{g \in X} \chi(g) \bar \chi(g) + \sum_{g \in Y} \chi(g) \bar \chi'(g) \ne 0, \]
which implies $\chi = \chi'$ by orthogonality relations and irreducibility.
\end{proof}

It is known that one cannot do better than this, cf.\ \cite{ramakrishnan:motives}.  
Namely, if $n=2^m$ then Buzzard, Edixhoven
and Taylor constructed distinct $n$-dimensional irreducibles $\rho$ and $\rho'$ such that
$\den(S) = |G|(1-1/2n^2)$ where $G$ is a central quotient of $Q_8^m$ ($Q_8$ is the quaternion
group of order 8) times $\{ \pm 1 \}$.  Serre showed the existence of similar examples for arbitrary
$n$.

We remark that Rajan \cite{rajan:l-adic} proved an analogue for (semisimple, finitely ramified) 
$\ell$-adic Galois representations of $\Gal(\bar F/F)$.

\section{Automorphic representations}

Let $\pi$ and $\pi'$ be irreducible automorphic cuspidal unitary representations of $\GL_n(\A_F)$.

For a finite place $v$, we can write
\[ L(s, \pi_v) = \prod_{i=1}^k (1-\alpha_{v,i} q_v^{-s})^{-1} \]
for some $0 \le k \le n$ and nonzero complex numbers $\alpha_{v,i}$.
Note that $L(s, \pi_v)$ is a nowhere vanishing meromorphic function whose set of poles are
precisely the values of $s$ such that $q_v^{s} = \alpha_{v,i}$ for some $1 \le i \le k$.  The latter
condition implies $q_v^{\Ree(s)} = |\alpha_{v,i}|$, and conversely for each real $x$ with
$q_v^x = |\alpha_{v,i}|$ for some $i$, there exists an $s$ such that $\Ree(s) = x$ and there
is a pole at $s$.  (If $k=0$, then $L(s, \pi_v) = 1$ and there are no poles.)

Similarly, write 
\[ L(s, \pi_v') = \prod_{i=1}^{k'} (1-\alpha'_{v,i} q_v^{-s})^{-1}. \]
The following observation, while simple, will be key for us in several places, so I will set
it off to highlight it.

\begin{fact}  Fix a finite place $v$ with $k, k' \ge 1$.  If the first (rightmost) pole for $L(s, \pi_v)$
occurs on the vertical line $\Ree(s) = x_0$, then $x_0 = \max \{ \frac{\log |\alpha_{v, i} |}{\log q_v} : 1 \le i \le k \}$.
Similarly, if the first pole for $L(s, \pi_v \times \bar \pi'_v)$
occurs on the vertical line $\Ree(s) = x_0$, then $x_0 = \max \{ \frac{\log |\alpha_{v, i} \alpha'_{v,j} |}{\log q_v} : 1 \le i \le k,
1 \le j \le k' \}$.
\end{fact}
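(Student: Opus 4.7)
The plan is to reduce both parts to a single pole-location observation and then apply it to the two $L$-factors in question. For any $\beta \in \C^\times$, the function $(1 - \beta q_v^{-s})^{-1}$ is nowhere vanishing on $\C$ and has simple poles exactly at the solutions of $q_v^s = \beta$; writing $\beta = |\beta|e^{i\theta}$, these occur at $s = \frac{\log|\beta|}{\log q_v} + i\,\frac{\theta + 2\pi m}{\log q_v}$ for $m \in \Z$, all lying on the single vertical line $\Ree(s) = \log|\beta|/\log q_v$.

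For the first statement, $L(s,\pi_v) = \prod_{i=1}^k (1 - \alpha_{v,i} q_v^{-s})^{-1}$ is a finite product of such nowhere-vanishing factors, so its pole set is simply the union of the pole sets of the individual factors. Hence the rightmost pole lies on the line $\Ree(s) = \max_i \log|\alpha_{v,i}|/\log q_v$, which is the desired formula. The second statement reduces to the same observation once $L(s,\pi_v \times \bar\pi'_v)$ is expressed as an Euler product $\prod_\ell (1 - \gamma_\ell q_v^{-s})^{-1}$ and we identify the moduli $|\gamma_\ell|$ with the set $\{|\alpha_{v,i}\alpha'_{v,j}|\}$ (or at least ensure the maximum is attained). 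At unramified $v$, this is immediate from the Satake description
\[
L(s,\pi_v \times \bar\pi'_v) \;=\; \prod_{i,j} \bigl(1 - \alpha_{v,i}\bar\alpha'_{v,j}\, q_v^{-s}\bigr)^{-1},
\]
combined with $|\alpha_{v,i}\bar\alpha'_{v,j}| = |\alpha_{v,i}\alpha'_{v,j}|$, so that the rightmost pole occurs at $\Ree(s) = \max_{i,j} \log|\alpha_{v,i}\alpha'_{v,j}|/\log q_v$, as claimed.

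The elementary pole counting above is routine; the real obstacle, if one insists on a uniform proof at all finite places, is the ramified case of the second statement. There, the Jacquet--Piatetski-Shapiro--Shalika Rankin-Selberg $L$-factor can drop some inverse roots corresponding to the monodromy filtration on the tensor-product Weil--Deligne parameter, so the set $\{\gamma_\ell\}$ is only a subset of $\{\alpha_{v,i}\bar\alpha'_{v,j}\}$. I would handle this by invoking the local Langlands correspondence to interpret the $\alpha_{v,i}$ and $\alpha'_{v,j}$ as Frobenius eigenvalues on the underlying Weil--Deligne spaces, and then arguing (via weight considerations on the monodromy filtration) that the maximum-modulus product $\alpha_{v,i}\bar\alpha'_{v,j}$ is never killed by the passage to the monodromy-invariants, so the formula $x_0 = \max_{i,j}\log|\alpha_{v,i}\alpha'_{v,j}|/\log q_v$ still holds.
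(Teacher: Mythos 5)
Your argument for the first statement, and for the second statement at unramified $v$, is correct and in substance coincides with what the paper actually does: the Fact carries no proof of its own, since it is merely a restatement of the paragraph immediately preceding it, where it is observed that the nowhere-vanishing product $\prod_i(1-\alpha_{v,i}q_v^{-s})^{-1}$ has pole set equal to the union of the pole sets of its factors, and each vertical line $\Ree(s)=\log|\alpha_{v,i}|/\log q_v$ does carry a pole.

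Your proposed repair of the ramified case of the second statement, however, does not work, and in fact the second identity as literally stated can fail at ramified $v$. Take $\pi_v=\pi'_v$ to be the unitary Steinberg representation of $\GL_2(F_v)$ with trivial central character. Then $L(s,\pi_v)=(1-q_v^{-1/2-s})^{-1}$, so $k=k'=1$ and $\alpha_{v,1}=\alpha'_{v,1}=q_v^{-1/2}$, giving $\max\{\log|\alpha_{v,i}\alpha'_{v,j}|/\log q_v\}=-1$. But the Langlands parameter of Steinberg is the special representation $\mathrm{sp}(2)$, and $\mathrm{sp}(2)\otimes\mathrm{sp}(2)\cong\mathrm{sp}(3)\oplus\mathrm{sp}(1)$, so $L(s,\pi_v\times\bar\pi'_v)=(1-q_v^{-1-s})^{-1}(1-q_v^{-s})^{-1}$, whose rightmost pole lies on $\Ree(s)=0$, not $\Ree(s)=-1$. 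Replacing the inverse roots of the local $L$-factor by the full Weil--Deligne Frobenius eigenvalues $\{q_v^{1/2},q_v^{-1/2}\}$, as your last paragraph suggests, gives $\max=1$, which is also wrong --- and it shows that the maximal-modulus product $\alpha\bar\alpha'=q_v$ \emph{is} killed in passing to the $N$-invariants, contrary to the claim you make via ``weight considerations on the monodromy filtration.'' So the Fact should be read as a statement about the naive product $\prod_{i,j}(1-\alpha_{v,i}\bar\alpha'_{v,j}q_v^{-s})^{-1}$, or equivalently at unramified $v$, which is all the paper ever uses (the finitely many ramified places are always absorbed into the exceptional set $S$); your attempt to promote it to an exact identity at every finite place cannot succeed.
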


\begin{theorem}[Strong Multiplicity One, \cite{JS}]  Suppose $L(s, \pi_v) = L(s, \pi'_v)$ for all $v$ outside of a
finite set $S$.  Then $\pi \simeq \pi'$.
\label{thm:SMO}
\end{theorem}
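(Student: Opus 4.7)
The plan is to deduce the theorem from the theory of Rankin--Selberg $L$-functions for $\GL_n \times \GL_n$ developed by Jacquet, Piatetski-Shapiro, and Shalika. The crucial input is the pole criterion: the completed Rankin--Selberg $L$-function $L^*(s, \pi \times \bar \pi')$ has a (simple) pole at $s = 1$ if and only if $\pi \simeq \pi'$. This converts the global isomorphism question into a question about the analytic behavior of a single global $L$-function at $s=1$.

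The first step is to upgrade the hypothesis from equality of local $L$-factors to local isomorphism at almost all places. After enlarging $S$ (still finite), we may assume every $v \notin S$ is a finite place unramified for both $\pi$ and $\pi'$, so $\pi_v$ is a full unramified principal series with Satake parameters $\{\alpha_{v,1}, \dots, \alpha_{v,n}\}$. The factor
\[
L(s, \pi_v) = \prod_{i=1}^n (1 - \alpha_{v,i} q_v^{-s})^{-1}
\]
determines these parameters as a multiset (they are the reciprocals of the zeros of $\prod_i (1 - \alpha_{v,i} t)$ in $t = q_v^{-s}$). Since an unramified representation of $\GL_n(F_v)$ is classified by its Satake parameters, $L(s, \pi_v) = L(s, \pi'_v)$ forces $\pi_v \simeq \pi'_v$, and hence the local Rankin--Selberg factors $L(s, \pi_v \times \bar \pi_v)$ and $L(s, \pi_v \times \bar \pi'_v)$ coincide for every $v \notin S$.

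The second step is to compare the two Rankin--Selberg $L$-functions globally. The local equality outside $S$ says the incomplete Euler products $L^S(s, \pi \times \bar \pi)$ and $L^S(s, \pi \times \bar \pi')$ are identical. Applying the pole criterion with $\pi' = \pi$ shows that $L^*(s, \pi \times \bar \pi)$ has a pole at $s=1$, and since the omitted finitely many local factors are nonvanishing meromorphic functions, the same must be true of $L^*(s, \pi \times \bar \pi')$. The pole criterion, applied now in the reverse direction, yields $\pi \simeq \pi'$.

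The main obstacle is not combinatorial but analytic: everything rests on the Rankin--Selberg machinery, particularly the meromorphic continuation of $L^*(s, \pi \times \bar \pi')$ and the precise characterization of its pole at $s = 1$. Granted that deep input, the passage from local $L$-factor equality to global isomorphism is formal.
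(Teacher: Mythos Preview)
Your approach is the same as the paper's---use the Rankin--Selberg pole criterion and compare $L^*(s,\pi\times\bar\pi)$ with $L^*(s,\pi\times\bar\pi')$ via the finitely many local factors at $S$---but there is a genuine gap in the second step. From the equality $L^S(s,\pi\times\bar\pi)=L^S(s,\pi\times\bar\pi')$ you get
\[
L^*(s,\pi\times\bar\pi') \;=\; L^*(s,\pi\times\bar\pi)\cdot\frac{L_S(s,\pi\times\bar\pi')}{L_S(s,\pi\times\bar\pi)},
\]
and you want the pole at $s=1$ on the left. You invoke only that the local factors are \emph{nonvanishing}. That controls the numerator $L_S(s,\pi\times\bar\pi')$, but not the denominator: if some $L(s,\pi_v\times\bar\pi_v)$ for $v\in S$ had a pole at $s=1$, then $1/L_S(s,\pi\times\bar\pi)$ would vanish there and could kill the pole you are trying to transfer. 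Nothing you have written rules this out.

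What is missing is precisely the paper's ingredient~(ii): the Jacquet--Shalika bound $|\alpha_{v,i}|<q_v^{1/2}$, which forces every local Rankin--Selberg factor to be holomorphic on $\Ree(s)\ge 1$. With that bound, $L_S(s,\pi\times\bar\pi)$ is holomorphic and nonvanishing at $s=1$, the ratio $L_S(s,\pi\times\bar\pi')/L_S(s,\pi\times\bar\pi)$ is holomorphic and nonzero there, and the pole survives. This is not a minor technicality you can absorb into ``the Rankin--Selberg machinery'': it is an independent analytic input, and the paper singles it out for exactly this reason. Once you add it, your argument is complete and matches the paper's.
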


This generalizes earlier results of Miyake \cite{miyake} for $n=2$ and Piatetski--Shapiro \cite{PS},
who needed to also assume the archimedean components match.
For $n=1$, this follows from strong approximation.

\begin{proof}
There are two ingredients, both proved in \cite{JS}.

(i) We have $\pi \simeq \pi'$ if and only if
$L(s, \pi \times \bar \pi')$ has a pole at $s=1$
(use the integral representation and orthogonality of cusp forms).

(ii) For finite $v$, we have the bound $|\alpha_{v,i}| < q_v^{1/2}$.  (The ramified case reduces to the unramified case.)

Now to prove strong multiplicity one, consider the ratio
\begin{equation} \label{eq:1}
 \frac{L(s, \pi \times \bar \pi)}{L(s, \pi \times \bar \pi')} =
\frac{L_S(s, \pi \times \bar \pi)}{L_S(s, \pi \times \bar \pi')}.
\end{equation}

By (ii), we see that
$L_S(s, \pi \times \bar \pi)$ and $L_S(s, \pi \times \bar \pi')$ both have no poles on $\Ree(s) \ge 1$.
Since these functions are also never zero, the right hand side has no pole in
$\Ree(s) \ge 1$.  
Since $L(s, \pi \times \bar \pi)$ has a pole at $s=1$ by (i), it must be canceled out
by a pole of $L(s, \pi \times \bar \pi')$ at $s=1$ in order for the ratio on the left to not have a pole 
there.  Thus, again by (i), we get $\pi \simeq \pi'$.
\end{proof}

We remark that Moreno \cite{moreno} proved an ``analytic'' SMO:  
if $\pi$ and $\pi'$ have bounded conductors and archimedean
parameters, there is an effective (but exponential) constant $X$ 
(depending on the bounds on conductors and archimedean 
parameters, $F$ and $n$) such that if $\pi_v \simeq \pi'_v$
for all $v$ with $q_v < X$, then $\pi \simeq \pi'$.  Note that for $n=2$, such a result gives you a
bound on the number of Fourier coefficients needed to distinguish modular forms of bounded
level and weight with the same nebentypus.  (Of course there will be some finite bound
because the space of such forms is finite dimensional.)
Further work has been done along these lines (for $n=2$ and general $n$),
but this is not our focus now and we will not discuss it further.  We are interested in results where
one does not impose {\em a priori} bounds on ramification or infinity types.

\medskip
Coming back to the usual SMO, note that in the above proof it was 
crucial $S$ be finite to conclude the RHS of \eqref{eq:1}
has no pole in $\Ree(s) \ge 1$.
To refine this, we need a couple more ingredients.  

First is an improvement on (ii).  Recall the Generalized Ramanujan
Conjecture (GRC) asserts that each $\pi_v$ is tempered, i.e., each $|\alpha_{v,i}| = 1$.
For general $n$, the best that is known is the Luo--Rudnick--Sarnak bound from \cite{LRS},
which says $|\alpha_{v,i}| < q_v^{1/2-1/(n^2+1)}$.  For $n=2$ we can do better.  Using $\Sym^2$,
Gelbart--Jacquet \cite{GJ} got a bound of  $q_v^{1/4}$.  With $\Sym^3$ this was improved to 
exponent $\frac 19$ by Kim--Shahidi \cite{KSh}, then further improved to $\frac 7{64}$ by 
Kim--Sarnak \cite{KSa} and Blomer--Brumley \cite{BB} using $\Sym^4$.
In fact, for us, a bound of the form $q_v^{\delta}$ for some $\delta < \frac 14$ is sufficient.

The second ingredient we need is Landau's lemma, which we explain now.

Let us say a Dirichlet series $L(s)$
is of {\bf positive type} if it has an Euler product (on some right half plane) and $\log L(s)$ is a Dirichlet
series with positive ($\ge 0$) coefficients.  Note 
\[ \log \frac 1{1-\alpha q^{-s}} = \sum_n \frac{\alpha^n/n}{q^{ns}} \]
is a Dirichlet series with positive coefficients if $\alpha \ge 0$.  Since the sum of Dirichlet series
with positive coefficients is again a Dirichlet series with positive coefficients (admitting convergence)
we see an $L$-series of the form 
\[ L(s) = \prod_i \frac 1{1-\alpha_i q_i^{-s}} \]
with each $\alpha_i \ge 0$ is of positive type (admitting convergence), e.g., a Dedekind zeta 
function.   More important for us will be examples like $L(s, \pi \times \bar \pi)$, 
which are also of positive type.

\begin{lemma}[Landau] Suppose $L(s)$ is a Dirichlet series of positive type.  Then no zero of $L(s)$
occurs to the right of the first (rightmost) pole, and the first pole occurs on the real axis.
\end{lemma}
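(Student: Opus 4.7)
The plan is to locate the rightmost pole of $L(s)$ at the abscissa of convergence $\sigma_c$ of the positive Dirichlet series $\log L(s) = \sum_n a_n n^{-s}$ (with $a_n \ge 0$), and to show it lies on the real axis.

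First I would note that on $\Ree(s) > \sigma_c$ this series converges absolutely to a holomorphic function, so $L(s) = \exp(\log L(s))$ is holomorphic and \emph{nowhere zero} there. Consequently $L$ has neither zeros nor poles in the open half-plane $\Ree(s) > \sigma_c$, which already yields the ``no zero to the right of the rightmost pole'' half of the statement \emph{provided} I can place the rightmost pole at $\sigma_c$.

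For the second half, I want to produce a pole of $L$ at $s = \sigma_c \in \R$. Restricting to real $s > \sigma_c$, each term $a_n n^{-s}$ is non-negative and monotonically decreasing in $s$, so monotone convergence gives $\lim_{s \downarrow \sigma_c} \log L(s) = \sum_n a_n n^{-\sigma_c} \in [0,+\infty]$. The key claim is that this limit is $+\infty$. If instead it were some finite $M$, then $L(\sigma_c) = e^M$ would be positive and finite; meromorphicity would then force $L$ to be holomorphic at $\sigma_c$, and choosing a branch of $\log$ around the positive value $e^M$ would extend $\log L$ holomorphically past $\sigma_c$ --- contradicting Landau's classical theorem that the abscissa of convergence of a Dirichlet series with non-negative coefficients is always a singular point of its analytic continuation. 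Hence $\log L(s) \to +\infty$, so $L(s) \to +\infty$ as $s \downarrow \sigma_c$ along the reals, and meromorphicity forces a pole of $L$ at the real point $\sigma_c$.

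Putting the two steps together gives exactly the lemma: $\sigma_c$ is the rightmost pole of $L$, lies on the real axis, and $L$ has no zero strictly to the right of it. The only non-routine ingredient is the appeal to Landau's singularity-at-the-abscissa theorem --- really the content that distinguishes positive-type Dirichlet series from arbitrary ones --- so this is the step I expect to be the main obstacle. Its standard proof, however (Taylor expand around a real point slightly to the right of $\sigma_c$ and rearrange the resulting double series using $a_n \ge 0$), is entirely independent of the $L$-function setting, and I would quote it wholesale rather than reprove it.
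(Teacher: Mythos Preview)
The paper does not give its own proof of this lemma; it is stated as a classical result attributed to Landau and then used as a black box. So there is nothing to compare your argument against.

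That said, your proposal is correct and is essentially the standard derivation. The identification of the rightmost pole with the abscissa of convergence $\sigma_c$ of $\log L(s)=\sum a_n n^{-s}$, the observation that $L=\exp(\log L)$ is holomorphic and nonvanishing for $\Ree(s)>\sigma_c$, and the appeal to Landau's theorem on the singularity of a non-negative Dirichlet series at its abscissa are exactly the right ingredients. Two small points worth making explicit: you are tacitly assuming $L$ admits a meromorphic continuation to a neighborhood of $\sigma_c$ (implicit in the lemma's phrasing, and always satisfied in the paper's applications), and you should note the trivial case $\sigma_c=-\infty$, where $L$ is entire and nowhere zero so the statement is vacuous. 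Neither affects the substance of your argument.
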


This will be extremely useful because we can now control the locations of not just poles of
$L$-functions, but also zeroes. 

\begin{theorem}[Refined SMO, Ramakrishnan \cite{ramakrishnan:SMO}] Suppose $n=2$ and $L(s, \pi_v) = L(s, \pi'_v)$ for all $v$
outside a set $S$ with $\delta(S) < \frac 18$.  Then $\pi \simeq \pi'$.
\label{thm:dinSMO}
\end{theorem}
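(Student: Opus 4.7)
The plan is to argue by contradiction: suppose $\pi\not\simeq\pi'$, and deduce $\delta(S)\ge 1/8$. Following the template of Theorem~\ref{thm:SMO} but with an auxiliary of positive type, I introduce the ``correlation''
\[
M(s) \;:=\; \frac{L(s,\pi\times\bar\pi)\,L(s,\pi'\times\bar\pi')}{L(s,\pi\times\bar\pi')\,L(s,\pi'\times\bar\pi)}.
\]
Since $\pi_v\simeq\pi'_v$ for $v\notin S$, all four Rankin--Selberg local factors coincide off $S$, so $M(s)=\prod_{v\in S}M_v(s)$. A direct Satake computation shows the coefficient of $q_v^{-ms}$ in $\log M_v(s)$ equals $\tfrac{1}{m}|a_v^{(m)}-b_v^{(m)}|^{2}\ge 0$, where $a_v^{(m)}=\alpha_{v,1}^{m}+\alpha_{v,2}^{m}$ and $b_v^{(m)}$ is the analogue for $\pi'$; hence $M$ is of positive type.

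A lower bound for $\log M(s)$ near $s=1$ comes from the pole structure. Since $\pi\not\simeq\pi'$, Jacquet--Shalika non-vanishing gives that $L(s,\pi\times\bar\pi')$ and $L(s,\pi'\times\bar\pi)$ are holomorphic and non-vanishing on $\mathrm{Re}(s)=1$, while $L(s,\pi\times\bar\pi)$ and $L(s,\pi'\times\bar\pi')$ each have a simple pole at $s=1$. So $M$ has a double pole at $s=1$ and
\[
\log M(s)\;=\;2\log\tfrac{1}{s-1}+O(1)\qquad(s\to 1^{+}).
\]

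The matching upper bound comes from Cauchy--Schwarz combined with the Gelbart--Jacquet $\Sym^{2}$ lift. The $m=1$ piece of $\log M(s)$ is $\Sigma(s):=\sum_{v\in S}|a_v-b_v|^{2}q_v^{-s}$, and Cauchy--Schwarz gives
\[
\Sigma(s)^{2}\;\le\;\Bigl(\sum_{v\in S}q_v^{-s}\Bigr)\Bigl(\sum_{v\in S}|a_v-b_v|^{4}q_v^{-s}\Bigr).
\]
The first factor is $\sim\delta(S)\log\tfrac{1}{s-1}$ by the definition of Dirichlet density. For the second, one uses $|a_v-b_v|^{4}\le 8(|a_v|^{4}+|b_v|^{4})$ together with the Gelbart--Jacquet decomposition $\pi\boxtimes\bar\pi=\mathbf{1}\boxplus\Ad(\pi)$ on $\GL(4)$, so that $|a_v|^{2}=1+\tr\Ad(\pi)_v$ at unramified $v$; expanding $|a_v|^{4}=(1+\tr\Ad(\pi)_v)^{2}$ then yields
\[
\sum_{v}|a_v|^{4}q_v^{-s}\;\sim\;2\log\tfrac{1}{s-1}\qquad(s\to 1^{+}),
\]
the $2$ coming from the simple poles at $s=1$ of $\zeta_F(s)$ and of $L(s,\Ad(\pi)\times\Ad(\pi))$ (the latter by Jacquet--Shalika self-duality, using cuspidality of $\Ad(\pi)$ when $\pi$ is non-dihedral). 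Combining gives $\Sigma(s)\le 4\sqrt{2\delta(S)}\,\log\tfrac{1}{s-1}+O(1)$. The $m\ge 2$ tail of $\log M(s)$ is $O(1)$ near $s=1$, using the Gelbart--Jacquet bound $|\alpha_{v,i}|<q_v^{1/4}$ (which makes each higher-$m$ sum absolutely convergent at $s=1$).

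Comparing the lower and upper bounds forces $2\le 4\sqrt{2\delta(S)}$, i.e.\ $\delta(S)\ge 1/8$, contradicting the hypothesis. The hardest step I anticipate is the fourth-moment asymptotic $\sum_v|a_v|^{4}q_v^{-s}\sim 2\log\tfrac{1}{s-1}$: for non-dihedral $\pi$ it follows from cuspidality of $\Ad(\pi)$ on $\GL(3)$ plus Jacquet--Shalika non-vanishing, but dihedral (CM / monomial) $\pi$ require separate bookkeeping since $\Ad(\pi)$ then splits further and the pole order of the fourth-moment sum can grow. Uniformly handling this endpoint case is what pins down the numerical constant $1/8$.
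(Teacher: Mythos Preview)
Your setup is the same as the paper's: the same auxiliary ratio $M=Z$, the same double pole at $s=1$, and the same reduction to bounding the $m=1$ contribution over $S$. Your Cauchy--Schwarz plus fourth-moment argument for non-dihedral $\pi,\pi'$ is essentially the ``careful analysis involving $L(s,\Ad(\pi))$ and $L(s,\Ad(\pi)\times\Ad(\pi))$'' that the paper alludes to as the technical crux, so in that regime you are on the right track and recover $1/8$.

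The gap is in the dihedral case, and you have the difficulty inverted. If $\pi$ (or $\pi'$) is dihedral, then $\Ad(\pi)$ is not cuspidal and the pole order of $L(s,\Ad(\pi)\times\Ad(\pi))$ at $s=1$ is at least $2$, so your fourth moment is $\sum_v|a_v|^4q_v^{-s}\sim c\log\tfrac{1}{s-1}$ with $c\ge 3$, not $2$. Plugging this into your Cauchy--Schwarz bound yields only $\delta(S)\ge 1/12$ (or $1/10$ in the mixed case), not $1/8$. So ``separate bookkeeping'' of the fourth moment does \emph{not} pin down $1/8$ here; your method as stated genuinely fails in the dihedral case. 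The fix is the opposite of what you suggest: dihedral implies tempered, so one abandons Cauchy--Schwarz entirely and uses the trivial pointwise bound $|a_v-b_v|^2\le 16$, giving $\Sigma(s)\le 16\,\delta(S)\log\tfrac{1}{s-1}$ and hence $\delta(S)\ge 1/8$. This is exactly how the paper handles it: the tempered (in particular dihedral) case is the \emph{easy} one, and the $\Ad\times\Ad$ analysis is reserved for the non-tempered (hence non-dihedral) regime. When one of $\pi,\pi'$ is dihedral and the other is not, one should split $|a_v-b_v|^2\le 2|a_v|^2+2|b_v|^2$ and bound each second moment separately by the appropriate method.

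A smaller point: the Gelbart--Jacquet bound $q_v^{1/4}$ is not quite enough to make the $m\ge 2$ tail $O(1)$, since the $m=2$ contribution is then essentially $\sum_{v\in S}q_v^{-1}$, which diverges for $S$ of positive density. You need a bound $q_v^{\delta}$ with $\delta<1/4$ (e.g.\ Kim--Shahidi), as the paper notes.
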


This was used by Taylor \cite{taylor} for constructing families of $\ell$-adic Galois representations to
modular forms over imaginary quadratic field.

\begin{proof}[Proof (Sketch)]
Suppose $\pi \neq \pi'$ and assume $S$ contains all places of ramification.  Put
\[ Z(s) = \frac{L(s, \pi \times \bar \pi)L(s, \pi' \times \bar \pi')}{L(s, \pi \times \bar \pi') L(s, \pi' \times \bar \pi)}. \]
Then, by (i), the numerator has a double pole at $s=1$ while the denominator has no pole there.  
Hence $Z(s)$ has a pole of order 2 at $s=1$.  By definition, $Z_v(s) = 1$ for any $v \not \in S$,
so we also have
\[ Z(s) = Z_S(s) =  \frac{L_S(s, \pi \times \bar \pi)L_S(s, \pi' \times \bar \pi')}{L_S(s, \pi \times \bar \pi') L_S(s, \pi' \times \bar \pi)}. \]
Put
\[ D_S(s) = L_S(s, \pi \times \bar \pi)L_S(s, \pi' \times \bar \pi')L_S(s, \pi \times \bar \pi') 
L_S(s, \pi' \times \bar \pi) \]
so 
\[ Z_S(s) = \frac{L_S(s, \pi \times \bar \pi)^2 L_S(\pi' \times \bar \pi')^2}{D_S(s)}. \]
This is convenient because $D_S(s)$ is a Dirichlet series of positive type, 
and one can check it is nonvanishing for $s \ge 1$, so it has no zero at $s=1$ by Landau's lemma.
We would like to get a contradiction by saying that $L_S(s, \pi \times \bar \pi)$ and 
$L_S(\pi' \times \bar \pi')$ can't have poles at $s=1$ for $S$ of sufficiently small density, but there is no reason they even need to be meromorphic at $s=1$.

Instead, we observe that $Z_S(s)$ having a pole of order 2 at $s=1$ means
\[ \lim_{s \to 1^+} \frac{\log Z_S(s)}{\log \frac 1{s-1}} = 2. \]
Hence to obtain a contradiction, it will suffice to show 
\begin{equation} \label{eq:desired}
 \lim_{s\to 1^+} \frac{\log L_S(s, \pi \times \bar \pi)}{\log \frac 1{s-1}} < \frac 12, 
\end{equation}
as the same argument will apply to $L_S(s,\pi \times \bar \pi')$.
For simplicity, assume $F=\Q$.  Say $\pi_p$ has Satake parameters 
$\{ \alpha_{1,p}, \alpha_{2,p} \}$.
Then
\begin{align*}
 \log L(s, \pi_p \times \bar \pi_p) &=  \sum_{1 \le i, j \le 2 } \log \frac 1{1-\alpha_{i,p}\alpha_{j,p}p^{s}}
= \sum_{1 \le i, j \le 2 } \sum_{n \ge 1} \frac{(\alpha_{i,p}\alpha_{j,p})^n}{np^{ns}} \\
&= \frac{c_p} {p^{s}} + O(p^{-2s}),
\end{align*}
where
\[ c_p = \sum_{1 \le i, j \le 2 } \alpha_{i,p}\alpha_{j,p}. \]
It is well known that the ``prime zeta function'' satisfies
\[ \sum_p \frac 1{p^s} = \log \frac 1{s-1} + O(1), \quad s \to 1^{+}. \]
So if $\pi$ is tempered at $p$, and then $|c_p| \le 4$ and one deduces
\begin{equation} \label{eq:Lpipi-tb}
 \log L_S(s, \pi \times \bar \pi) \le 4\delta(S) \log \frac 1{s-1} + o(\log \frac 1{s-1}),
\end{equation}
and we are done as $\delta(S) < \frac 18$.

So the difficulty is when $\pi$ is not tempered.  Here one needs the above-mentioned bound
towards GRC: $|\alpha_{i,v}| \le q_v^{\delta}$ with $\delta < \frac 14$.
Then Ramakrishnan does a careful analysis involving $L(s, \Ad(\pi))$ and 
$L(s, \Ad(\pi) \times \Ad(\pi))$ to treat the case when $\Ad(\pi)$ is cuspidal.  
(This is the technical crux of the proof, but it will not come up later for us, so we will not explain
this analysis.)
If $\Ad(\pi)$ is not
cuspidal, then $\pi$ is induced from a character of a quadratic extension, and therefore tempered
everywhere.
\end{proof}

In fact, Rajan \cite{rajan:SMO} observed this is also true if one just assumes equality of
coefficients of Dirichlet series (i.e., sums of Satake parameters---or, for modular forms, Fourier
coefficients) at primes $v \not \in S$.  This is analogous to only requiring equalities of traces
$\tr \rho(\Fr_v) = \tr \rho'(\Fr_v)$ for Galois representations.

Note that Ramakrishnan's result is sharp, which one can deduce from $n=2$ examples which show
Proposition \ref{prop:gal-SMO} is sharp.  Nevertheless, Walji \cite{walji} was able to
prove some refinements, such as the following: 
if $n=2$ and $\pi$ and $\pi'$ are not dihedral (induced from quadratic extensions), 
then a refined SMO is true with the stronger bound $\delta(S) < \frac 14$.

\medskip
Now let's go back to considering arbitrary $n$.  

\begin{conj}[Ramakrishnan \cite{ramakrishnan:motives}] A refined SMO is true with $\delta(S) < \frac 1{2n^2}$.
\label{conj:din}
\end{conj}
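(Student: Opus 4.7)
My plan is to follow exactly the strategy of Theorem \ref{thm:dinSMO}, replacing $n=2$ throughout by general $n$, and to isolate the single ingredient that genuinely requires more than the $\GL(2)$ input. Assume $\pi \not\simeq \pi'$ and enlarge $S$ to contain all ramified places. Form
\[ Z(s) = \frac{L(s, \pi \times \bar\pi)\, L(s, \pi' \times \bar\pi')}{L(s, \pi \times \bar\pi')\, L(s, \pi' \times \bar\pi)}, \]
which by fact (i) has a pole of order exactly $2$ at $s=1$, and which equals $Z_S(s)$ because $Z_v = 1$ for $v \notin S$. Writing $Z_S(s) = L_S(s, \pi \times \bar\pi)^2 L_S(s, \pi' \times \bar\pi')^2 / D_S(s)$ where $D_S$ is the product of all four $L_S$ factors, I would verify that $D_S$ is of positive type with first pole at $s=1$; Landau's lemma then forces $\log D_S(s) \ge 0$ for $s$ just to the right of $1$.

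Taking logarithms and using $\log Z_S \le 2\log L_S(s,\pi\times\bar\pi) + 2\log L_S(s,\pi'\times\bar\pi')$, the order-$2$ pole of $Z_S$ at $s=1$ yields the lower bound
\[ \liminf_{s \to 1^+} \frac{\log L_S(s, \pi\times\bar\pi) + \log L_S(s, \pi'\times\bar\pi')}{\log\frac{1}{s-1}} \ge 1. \]
For the complementary upper bound, a short expansion of the local Euler factor at unramified $v$ gives
\[ \log L_v(s, \pi_v \times \bar\pi_v) = \frac{c_v}{q_v^s} + O(q_v^{-2s}), \qquad c_v = \Big|\sum_i \alpha_{i,v}\Big|^2 \ge 0. \]
Under the Generalized Ramanujan Conjecture each $|\alpha_{i,v}| = 1$, giving $c_v \le n^2$; combined with $\sum_{v \in S} q_v^{-s} = \delta(S) \log\frac{1}{s-1} + O(1)$, one obtains $\limsup \log L_S(s, \pi\times\bar\pi)/\log\frac{1}{s-1} \le n^2 \delta(S)$, and likewise for $\pi'$. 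Together with the lower bound this forces $2n^2 \delta(S) \ge 1$, contradicting the hypothesis $\delta(S) < \frac{1}{2n^2}$.

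The hard part is clearly the non-tempered case. The best unconditional estimate---the Luo--Rudnick--Sarnak bound $|\alpha_{i,v}| < q_v^{1/2 - 1/(n^2+1)}$---only yields $c_v \ll q_v^{1 - 2/(n^2+1)}$, making $c_v/q_v^s$ divergent as $s \to 1^+$ and collapsing the direct argument entirely. For $n=2$ Ramakrishnan circumvents GRC by working with $L(s, \Ad(\pi))$ and $L(s, \Ad(\pi) \times \Ad(\pi))$, both automorphic by Gelbart--Jacquet, and reducing to the dihedral case (where $\pi$ is tempered everywhere). A natural generalization to arbitrary $n$ would demand an analogous analysis of the isobaric decomposition of $\pi \boxtimes \bar\pi$ using higher symmetric-power and adjoint $L$-functions on $\GL(N)$, whose existence as automorphic $L$-functions remains a major open problem for $n \ge 5$. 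I therefore expect the scheme above to prove Conjecture \ref{conj:din} under GRC (or under sufficient partial functoriality), but the unconditional conjecture to remain inaccessible in full generality until substantial further progress is made toward either bound.
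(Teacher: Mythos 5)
This statement is a \emph{conjecture}, and the paper does not prove it --- it only sketches why it should hold and what the obstruction is. Your proposal recovers precisely that sketch, and correctly so. You repeat the $Z(s)$/Landau argument from Theorem~\ref{thm:dinSMO}, note that nothing in the tempered case uses $n=2$, compute $c_v = \bigl|\sum_i \alpha_{i,v}\bigr|^2 \le n^2$ under GRC (which is where the paper's $4\delta(S)$ in \eqref{eq:Lpipi-tb} becomes $n^2\delta(S)$), and conclude the contradiction when $2n^2\delta(S) < 1$, i.e.\ $\delta(S) < \tfrac{1}{2n^2}$. You also put your finger on exactly the obstruction the paper highlights: Luo--Rudnick--Sarnak gives only $|\alpha_{i,v}| < q_v^{1/2 - 1/(n^2+1)}$, so $c_v \ll q_v^{1-2/(n^2+1)}$ and the local log-term $c_v q_v^{-s}$ is not summable near $s=1$; and the $\Ad(\pi)$ device that rescues $n=2$ has no known analogue for general $n$ since the requisite higher-rank functoriality is open.

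Two small imprecisions, neither fatal: (1) you claim $D_S$ has ``first pole at $s=1$,'' but all that is needed (and all that is clear) is that $D_S$ is of positive type and its $\log$-Dirichlet series converges for $s>1$, which gives $\log D_S(s) \ge 0$ there; whether $D_S$ actually has a pole at $s=1$ is neither known nor needed. (2) It would be worth recording, as the paper does, that $\tfrac{1}{2n^2}$ is sharp: the Serre/Buzzard--Edixhoven--Taylor examples showing Proposition~\ref{prop:gal-SMO} is sharp can be chosen to factor through nilpotent Galois groups, hence are automorphic by Arthur--Clozel, so one cannot hope to improve the density bound.
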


For $n=1$ this is true by class field theory and Proposition \ref{prop:gal-SMO}.  For $n=2$, this
is precisely the content of Theorem \ref{thm:dinSMO}.

Let's think back to the proofs of Theorems \ref{thm:SMO} and \ref{thm:dinSMO} to see what
is needed to prove a refined SMO result.  In the proof of the usual SMO (Theorem \ref{thm:SMO})
we wanted to show $L_S(s, \pi \times \bar \pi)$ has no pole at $s=1$ and $L_S(s, \pi \times \bar \pi')$
has no zero at $s=1$.  To prove refined SMO for $\GL(2)$ (Theorem \ref{thm:dinSMO}), Ramakrishnan
considered a ratio $Z(s)$ and used Landau's lemma to essentially translate the problem
into showing both $L_S(s, \pi \times \bar \pi)$ and $L_S(s, \pi \times \bar \pi')$ have no poles in 
$\Ree(s) \ge 1$.  Let's just consider $L_S(s, \pi \times \bar \pi)$ since the idea for
$L_S(s, \pi \times \bar \pi')$ is similar.  

Suppose we have a bound towards GRC which says
each $L(s, \pi_v \times \bar \pi_v)$ has no pole in $\Ree(s) > 2\delta < 1$.  Then, morally, 
if $S$ is not too dense the bound for the first pole of $L_S(s, \pi \times \bar \pi)$ should not
be pushed too far to the right of $2\delta$.  (If $S$ has density 1, then $L_S(s, \pi \times \bar \pi)$
can have a pole up to 1 unit to the right of $2\delta$.)  The actual argument is more subtle than
this, but we will return to this moral shortly.

Looking at the argument for the tempered case of 
Theorem \ref{thm:dinSMO}, we see
the fact that $n=2$ was not really crucial.  For general $n$, the $4\delta(S)$ in \eqref{eq:Lpipi-tb}
becomes $n^2\delta(S)$, and this is less than the $\frac 12$ 
required in \eqref{eq:desired} precisely when $\delta(S) < \frac 1{2n^2}$.
In other words, this conjecture should follow from GRC and.  Moreover,
the bound $\delta(S) < \frac 1{2n^2}$ must be sharp by the existence of examples of
Galois representations showing Proposition \ref{prop:gal-SMO} is sharp---here one can take
these examples to be of finite nilpotent Galois groups, where one knows modularity by
Arthur--Clozel \cite{AC}.

Unfortunately, the Luo--Rudnick--Sarnak bounds toward GRC only tell us
each $L(s, \pi_v \times \bar \pi_v)$ has no pole in $\Ree(s) \ge 1-\frac{2}{n^2+1}$, which
does not seem to be enough to force $L_S(s, \pi \times \bar \pi)$ to have no pole in $\Ree(s) \ge 1$
for any $S$ of positive density.  So for not-necessarily tempered representations of $\GL(n)$
we don't know any refined SMO for $n > 2$ and $S$ of positive density at present, but
we can treat certain infinite sets $S$ of density 0.
(In fact, at the time of his conjecture, Ramakrishnan announced he had a weak result for $n > 2$ (\cite{ramakrishnan:SMO}, \cite{ramakrishnan:motives}), 
but did not publish a result of this type until recently---see below.)

We remark that there have been spectacular results on proving GRC for certain classes of
representations for $\GL(n)$ to which one often knows how to associate Galois representations,
e.g., cohomological self-dual representations over a totally real field.  For instance,
see Clozel's aphoristically titled article \cite{clozel}.

\medskip
In \cite{rajan:SMO}, Rajan showed that a refined SMO is true for arbitrary $n$ if
\[ \sum_{v \in S} q_v^{- \frac 2{n^2+1} } < \infty. \]
This is not difficult---this condition implies that the first pole of 
$L_S(s, \pi \times \bar \pi)$ is not more than $\frac 2{n^2+1}$ to the right of the first pole of a
local factor (the argument is the same as for the Key Observation below).  
So by the Luo--Rudnick--Sarnak bound, this is precisely what one needs to conclude
$L_S(s, \pi \times \bar \pi)$ has no pole in $\Ree(s) \ge 1$.
However this condition only holds for {\em very} sparse sets of primes.
It is much stronger than $\sum q_v^{-1} < \infty$, which is in turn stronger
than requiring $\delta(S) = 0$, so one cannot handle $S$ of positive density.
An example of where this applies is: let $F/\Q$ be cyclic of prime degree
$p > \frac{n^2+1}2$ and let $S \subset \Sigma_F$ consist of inert primes in $F/\Q$.  
(In \cite{rajan:SMO}, Rajan says $S$ has positive density in this example,
but presumably he means the corresponding primes of $\Q$, rather than $F$, have
positive density: by Chebotarev, the density of the underlying primes of $S$ in 
$\Sigma_\Q$ has density $\frac{p-1}p$ in $\Sigma_\Q$.)

\medskip
Recently, Ramakrishnan proved the following result.

\begin{theorem}[Ramakrishnan \cite{ramakrishnan}]
Suppose $F$ is a cyclic extension of prime degree $p$ of some number field $k$.  
A refined SMO is true when
$S \subset \Sigma_F$ contains only finitely many primes which are split over $k$.
\end{theorem}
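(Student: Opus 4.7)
The strategy I would adopt mirrors the proof of Theorem \ref{thm:dinSMO}, exploiting the arithmetic peculiarity that a prime $v$ of $F$ inert over $k$ has norm $q_v = q_w^p$, a $p$-th power of the norm of its restriction $w$ to $k$. Assume for contradiction that $\pi \not\simeq \pi'$, and form
\[ Z(s) = \frac{L(s, \pi \times \bar\pi) L(s, \pi' \times \bar\pi')}{L(s, \pi \times \bar\pi') L(s, \pi' \times \bar\pi)}. \]
By ingredient (i) of the proof of Theorem \ref{thm:SMO}, $Z(s)$ has a pole of exact order $2$ at $s=1$. As in the proof of Theorem \ref{thm:dinSMO}, the hypothesis $L(s, \pi_v) = L(s, \pi'_v)$ for $v \notin S$ forces $Z_v(s) \equiv 1$ off $S$, so $Z(s) = Z_S(s)$ and in particular
\[ \lim_{s \to 1^+} \frac{\log Z_S(s)}{\log \frac{1}{s-1}} = 2. \]
The plan is to contradict this by bounding $\log Z_S(s)$ from above through a careful analysis of its Dirichlet expansion.

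Decompose $S = S_{\mathrm{sp}} \sqcup S_{\mathrm{ns}}$ into the part split over $k$ (finite by hypothesis) and the part that is inert or ramified over $k$. The finite piece $S_{\mathrm{sp}}$ contributes $O(1)$ to $\log Z_S(s)$ trivially. For $v \in S_{\mathrm{ns}}$ lying over $w \in \Sigma_k$, we have $q_v = q_w^p$, so under a GRC-type bound $|\alpha_{i,v}| \le q_v^\delta$ the leading local contribution to each of the four factors $\log L_v(s, \pi \times \bar\pi')$, etc., is of size $O(n^2 q_v^{2\delta - s}) = O(n^2 q_w^{p(2\delta - s)})$. Summing over $w \in \Sigma_k$ yields a series dominated by $\sum_w q_w^{-p(s-2\delta)}$, which converges at $s=1$ provided $\delta < (p-1)/(2p)$. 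In particular, when $\pi$ and $\pi'$ are tempered ($\delta = 0$), this is automatic for every prime $p \ge 2$, and one immediately concludes $\log Z_S(s) = O(1)$, contradicting the limit above.

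The main obstacle is the non-tempered case: the Luo--Rudnick--Sarnak bound $\delta < 1/2 - 1/(n^2+1)$ is only strong enough to give $\delta < (p-1)/(2p)$ when $p > (n^2+1)/2$, and for smaller $p$ (notably $p=2$ with $n \ge 2$) one cannot close the argument on the nose. To handle this, I would follow Ramakrishnan's adjoint strategy from Theorem \ref{thm:dinSMO}: argue that either $\pi$ is induced from a Hecke character of an intermediate cyclic extension (and hence tempered everywhere, reducing to the easy case), or $\Ad(\pi)$ is cuspidal and one can extract additional cancellation by analyzing $L(s, \Ad(\pi) \times \Ad(\pi))$ along $S_{\mathrm{ns}}$, now benefiting again from the $p$-th power compression $q_v = q_w^p$ at non-split primes. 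The delicate bookkeeping in this last step, which must cover all residual non-tempered cases for general $n$ and $p$, is where I expect the real technical work of the proof to lie.
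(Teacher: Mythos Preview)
Your setup and treatment of the large $p$ case are essentially correct and match the paper: the ratio $Z(s)$, the reduction to bounding $\log L_S(s,\pi\times\bar\pi)$ near $s=1$, and the observation that inert primes have $q_v=q_w^p$ so that the relevant Euler product over $S_{\mathrm{ns}}$ converges for $\Ree(s)>2\delta+\tfrac{1}{p}$ --- this is exactly the paper's Key Observation, and with the Luo--Rudnick--Sarnak bound it finishes the case $p\ge(n^2+1)/2$.

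The gap is your handling of small $p$. You propose to import the adjoint/$\Ad(\pi)$ strategy from Theorem~\ref{thm:dinSMO}, but that argument is intrinsically a $\GL(2)$ phenomenon: it relies on the Gelbart--Jacquet symmetric square lift and the dichotomy ``$\pi$ dihedral or $\Ad(\pi)$ cuspidal,'' neither of which has an analogue for general $n$. There is no known way to extract the needed extra cancellation this way for $\GL(n)$ with $n>2$, so this line cannot close the argument.

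The paper's actual device for small $p$ is completely different and is the main new idea. Ramakrishnan uses Kummer theory (Lemma~\ref{lem:din}) to produce solvable extensions $K/F$ sitting in a chain of cyclic $p^2$-extensions over $k$, with the property that an unramified prime of $K$ above an inert $v\in S$ has degree $p^m$ over $k$. Applying the Key Observation over $K$ then gives holomorphy of $L_T(s,\pi_K\times\bar\pi_K)$ in $\Ree(s)>1-\tfrac{2}{n^2+1}+\tfrac{1}{p^m}$, and for $m$ large enough this is $<1$, yielding $\pi_K\simeq\pi'_K$. The final step --- which you are also missing --- is a descent argument: one must vary $K$ over sufficiently many such extensions to deduce $\pi\simeq\pi'$ over $F$ itself. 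Both the Kummer-theoretic construction and the descent are essential, and neither is suggested by the adjoint approach.
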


This is still density 0, and satisfies Rajan's criterion when $p$ is large, so the main content is for
$p$ small.  In fact, $p=2$ is the hardest case, and this case was used in a crucial
way in trace formula comparisons of Wei Zhang \cite{zhang} and Feigon--Martin--Whitehouse
\cite{FMW}.
More recently, Ramakrishnan \cite{ramakrishnan:general} 
has extended this to the arbitrary Galois case, where a quite different 
approach was required.

\begin{proof}[Proof (sketch)]
As explained above, the key point is to show that $L_S(s, \pi \times \bar \pi)$ has no pole in 
$\Ree(s) \ge 1$.  There are two ingredients to the proof.  First is the following elementary but key
fact, which we want to highlight because we will use it again in the next section.  

\begin{keyobs} Let $S_j$ be the set of primes of degree $j$.  Suppose for each $v \in S_j$
we have numbers $\alpha_v$ such that $|\alpha_v| < q_v^\delta$.  Then $\prod_{S_j} \frac 1{1-\alpha_v q_v^{-s}}$ converges absolutely in $\Re(s) > \delta + \frac 1j$.
\end{keyobs}

This is a special case where we can make our above-mentioned ``moral'' precise.  It says that 
a product of local factors over primes of degree $j$ will not have in a pole which is more than 
$\frac 1j$ to the right of a pole of any local factor.

\begin{proof}
Note
\[ \log \prod_{v \in S_j} \frac 1{1-\alpha_v q_v^{-s}} = \sum_v \sum_m \frac{\alpha_v^m}{mq_v^{sm_v}}
\le \sum_v \sum_m \frac 1{q_v^{(s-\delta)m}} \]
If we denote by $p_v$ the rational prime below $q_v$, then $q_v \ge p_v^j$ so the above is bounded
(absolutely) by
\[ \sum_v \sum_m \frac 1{p_v^{(s-\delta)jm}} \le \sum_m \frac 1{m^{(s-\delta)j}}, \]
which converges if $(\Ree(s)-\delta)j > 1$, i.e., if $\Ree(s) > \delta + \frac 1j$.
\end{proof}

Now by Luo--Rudnick--Sarnak, we know each local factor of $L(s, \pi \times \bar \pi)$ has no pole
in $\Ree(s) \ge 1- \frac 2{n^2+1}$, so the above observation tells us $L_S(s,\pi \times \bar \pi)$
has no pole in $\Ree(s) > 1 - \frac 2{n^2+1} + \frac 1p$, since all but a finite number (which do not
matter) of primes in $S$ have degree $p$.  Hence if $p \ge \frac{n^2+1}2$ we are done.

To deal with smaller $p$, Ramakrishnan uses Kummer theory to prove the following.

\begin{lemma} Suppose $K/F$ is a degree $p^{m-1}$ extension such that $K/k$ is a nested chain of cyclic $p^2$-extensions.  If $v$ is a prime of $F$ of degree $p$ over $k$, and $w$ is an unramified
prime of $K$ over $v$, then $w$ has degree $p^m$ over $k$.
\label{lem:din}
\end{lemma}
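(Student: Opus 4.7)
My plan is to read the ``nested chain of cyclic $p^2$-extensions'' as a tower
\[ k = k_0 \subset k_1 \subset k_2 \subset \cdots \subset k_m = K \]
in which $k_1 = F$, each single step $k_i/k_{i-1}$ is cyclic of degree $p$, and each two-step piece $k_{i+1}/k_{i-1}$ is cyclic of degree $p^2$ (the overlapping $p^2$-structure being what Kummer theory is presumably used to arrange at the outset). I would then prove by induction on $i$ that the unique prime $v_i$ of $k_i$ sitting under $w$ satisfies $f(v_i \mid v_0) = p^i$, where $v_0$ denotes the prime of $k$ below $v = v_1$. Taking $i = m$ then gives the desired conclusion $f(w \mid v_0) = p^m$.

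The base case $i = 1$ is the hypothesis. For the inductive step I would fix the cyclic degree-$p^2$ extension $k_{i+1}/k_{i-1}$ and consider the Frobenius $\sigma := \Fr_{v_{i-1}} \in \Gal(k_{i+1}/k_{i-1}) \cong \Z/p^2\Z$, which is well-defined because $w$ is unramified over $v_0$ and ramification is multiplicative in towers. Its order is exactly $f(v_{i+1} \mid v_{i-1})$, and its image under the quotient $\Gal(k_{i+1}/k_{i-1}) \twoheadrightarrow \Gal(k_i/k_{i-1}) \cong \Z/p\Z$ is the Frobenius of $v_{i-1}$ in $k_i$, which is nontrivial by the inductive inertness $f(v_i \mid v_{i-1}) = p$ (obtained from $f(v_i \mid v_0) = p^i$ and $f(v_{i-1} \mid v_0) = p^{i-1}$).

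The crux, and really the only nonformal step, is the elementary cyclic-group observation that in $\Z/p^2\Z$ any element whose reduction mod $p$ is nonzero must have order exactly $p^2$, since the unique index-$p$ subgroup is $p\Z/p^2\Z$. Hence $\sigma$ has order $p^2$, giving $f(v_{i+1} \mid v_{i-1}) = p^2$, then $f(v_{i+1} \mid v_i) = p$, and finally $f(v_{i+1} \mid v_0) = p^{i+1}$, closing the induction. I do not anticipate any genuine obstacle inside the lemma itself; the real work is in producing the overlapping cyclic $p^2$-structure via Kummer theory (which is the content of the hypothesis and so external to this argument), not in deducing the splitting behavior from it. The point I would be most careful about is the mild unramifiedness bookkeeping — verifying that $w/v_0$ unramified forces $v_{i-1}$ to be unramified in $k_{i+1}/k_{i-1}$ so that $\Fr_{v_{i-1}}$ is defined — but this is immediate from the tower of intermediate primes $v_0, v_1, \ldots, v_m = w$.
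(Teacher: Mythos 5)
The paper does not actually supply a proof of this lemma; it is stated in the course of a proof sketch for Ramakrishnan's theorem and is simply attributed (``Ramakrishnan uses Kummer theory to prove the following'') to \cite{ramakrishnan}, so there is no in-paper argument to compare yours against. That said, your proof is correct and complete under the natural reading of ``nested chain of cyclic $p^2$-extensions'' as a tower $k=k_0\subset k_1=F\subset\cdots\subset k_m=K$ with each single step cyclic of degree $p$ and each overlapping two-step piece $k_{i+1}/k_{i-1}$ cyclic of degree $p^2$. The induction on $f(v_i\mid v_0)$ is set up correctly, and the crux --- that an element of $\Z/p^2\Z$ with nonzero image in the quotient $\Z/p\Z$ must have order $p^2$, because the kernel $\Gal(k_{i+1}/k_i)$ is the unique subgroup of order $p$ --- is exactly the right observation. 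You are also right that the unramifiedness bookkeeping is automatic: $v/v_0$ has $f=p=[F:k]$, hence $e=g=1$, so $w/v_0$ is unramified once $w/v$ is, and all intermediate Frobenii are defined. One small point worth making explicit is that the induction really uses the hypothesis at both $i$ and $i-1$ (to get $f(v_i\mid v_{i-1})=p$), so it should be phrased as strong induction or as ``$f(v_j\mid v_0)=p^j$ for all $j\le i$''; this is cosmetic. You are also correct in diagnosing where Kummer theory enters: it is needed to \emph{construct} a $K$ realizing the overlapping cyclic $p^2$-structure over a given $F/k$ (and, for $p=2$, to handle the $\sqrt{-1}\notin F$ case, as alluded to in Step 2 of the modularity proof later in the paper), not to derive the splitting behavior once such a $K$ is handed to you.
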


Consequently, given such an extension $K/F$, the Key Observation tells us that
$L_S(s, \pi_{K} \times \bar \pi_{K})$ has no poles in $\Ree(s) > 1- \frac 2{n^2+1} + \frac 1{p^m}$.
Taking $m$ large enough we can conclude $L_S(s, \pi_{K} \times \bar \pi_{K})$ has no poles
in $\Ree(s) \ge 1$, and thus that $\pi_K \simeq \pi'_K$.  

To finish the proof, one must carefully vary the field $K$ to get $\pi_K \simeq \pi'_K$ over sufficiently
many extension $K/F$ to deduce the isomorphism $\pi \simeq \pi'$ over $F$.
\end{proof}

We will a use similar idea for our result in the next section.

\section{Modularity}

Let $\rho$ be an irreducible $n$-dimensional Artin representation of $\Gamma_F = \Gal(\bar F/F)$.  
Let $\pi$ be a cuspidal automorphic representation of $\GL_n(\A_F)$.  Here we are
interested in comparing $\rho$ and $\pi$.

Recall we say $\rho$ is modular if $L(s,\rho)$ agrees with $L(s,\pi(\rho))$ at almost all places, for
some cuspidal automorphic representation $\pi(\rho)$ of $\GL_n(\A_F)$.  (By SMO, $\pi(\rho)$
is unique up to isomorphism.)  The strong Artin, or modularity, conjecture asserts that
every $\rho$ is modular.  The following well-known result tells us an equivalent definition of
modularity is $L(s,\rho) = L(s,\pi)$ (in the sense of equality of Euler products).

\begin{prop} If $L(s,\rho_v) = L(s,\pi_v)$ for almost all $v$, then $L(s, \rho) = L(s,\pi)$.  Further
we have an identity of total archimedean factors $L_\infty(s,\rho) = L_\infty(s,\pi)$.
\end{prop}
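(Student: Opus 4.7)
The plan is to bootstrap from equality of local $L$-factors at almost all places to equality at every place (including archimedean) by exploiting the completed functional equations on both sides. Let $S_f$ be the finite set of finite places where the local factors differ (or where either side is ramified), and define
\[ R(s) = \prod_{v \in S_f} \frac{L(s,\rho_v)}{L(s,\pi_v)}, \]
a finite product of rational functions in the variables $q_v^{-s}$. By hypothesis $L(s,\rho) = R(s) L(s,\pi)$, and the analogous identity holds for the contragredients $\bar\rho$, $\bar\pi$ (again Artin and cuspidal automorphic, respectively). By Brauer induction together with Hecke's functional equation for idele class characters, the completed Artin $L$-function $L^*(s,\rho) = N_\rho^{s/2} L_\infty(s,\rho) L(s,\rho)$ satisfies an $s\leftrightarrow 1-s$ functional equation relating it to $L^*(1-s,\bar\rho)$, and by Godement--Jacquet the same is true for $L^*(s,\pi)$. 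Dividing, the ratio
\[ \Phi(s) := \left(\frac{N_\rho}{N_\pi}\right)^{s/2} \frac{L_\infty(s,\rho)}{L_\infty(s,\pi)} R(s) \]
is forced to satisfy a functional equation of the shape $\Phi(s) = c\,\overline{\Phi(1-\bar s)}$ for some nonzero constant $c$.

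The next step is to match the archimedean factors via growth comparison in vertical strips. The function $R(s)$ is periodic in $\mathrm{Im}(s)$ in each $q_v^{-s}$ variable and is of polynomial growth in vertical strips away from its finitely many poles. In contrast, $L_\infty(s,\rho)/L_\infty(s,\pi)$ is a ratio of products of $\Gamma_\R$- and $\Gamma_\C$-factors whose Stirling expansions at large $|\mathrm{Im}(s)|$ produce a specific linear combination of $s\log s$ terms detecting the net number and shifts of the gamma factors on either side. Equating the large $|\mathrm{Im}(s)|$ asymptotics on the two sides of the functional equation for $\Phi(s)$ forces the gamma-factor data on the $\rho$- and $\pi$-sides to coincide termwise, giving the second assertion $L_\infty(s,\rho) = L_\infty(s,\pi)$.

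Once the archimedean factors agree, $\Phi(s)$ reduces to $(N_\rho/N_\pi)^{s/2} R(s)$, and since $L^*(s,\rho)$ and $L^*(s,\pi)$ have only the same possible global poles (which cancel in the ratio by the almost-everywhere hypothesis), this quantity must be entire in $s$ while remaining rational in each $q_v^{-s}$. Such a function satisfying the $s\leftrightarrow 1-s$ symmetry pins down $N_\rho = N_\pi$ and forces each local factor $L(s,\rho_v)/L(s,\pi_v)$, for $v \in S_f$, to equal $1$, completing the proof. The main obstacle will be the archimedean step: the whole argument rests on the claim that the polynomially bounded $R(s)$ cannot mask a mismatch in the gamma-factor data, which ultimately comes from the characteristic Stirling behaviour of $\Gamma_\R(s)$ and $\Gamma_\C(s)$ under $s\mapsto 1-s$ (via the reflection and duplication formulas) and the fact that these two building blocks are linearly independent in an appropriate asymptotic sense.
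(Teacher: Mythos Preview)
Your strategy is genuinely different from the paper's, which follows the Deligne--Serre twisting trick: choose an idele class character $\chi$ that is trivial at every archimedean place but so highly ramified at each $v\in S_f$ that $L(s,\rho_v\otimes\chi_v)=L(s,\pi_v\otimes\chi_v)=1$. This forces a literal equality $L(s,\rho\otimes\chi)=L(s,\pi\otimes\chi)$ of global Euler products, and comparing the two completed functional equations then yields $L_\infty(s,\rho)=L_\infty(s,\pi)$ directly (the archimedean factors being unchanged by $\chi$). One then repeats with $\chi$ trivial at a single chosen $v_0\in S_f$ to isolate $L(s,\rho_{v_0})=L(s,\pi_{v_0})$. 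No growth analysis or zero--pole bookkeeping is needed.

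Your direct attack on a single functional equation has two real gaps. First, the Stirling step does not force termwise equality of gamma factors. Because $\rho$ and $\pi$ have the same degree $n$, the exponential factors $e^{-\pi|t|/2}$ cancel in $L_\infty(s,\rho)/L_\infty(s,\pi)$, and the ratio has exact polynomial order $|t|^{c}$ in vertical strips, with $c$ determined only by a single linear combination of the archimedean shifts. This is the same growth class as $R(s)$ (which is periodic, hence bounded, in $\mathrm{Im}(s)$ away from its poles), so vertical asymptotics of the functional equation constrain at most that one linear combination, not the individual $\Gamma_\R$ and $\Gamma_\C$ data. Uniqueness of gamma factors from a functional equation is a delicate issue (cf.\ the Selberg class literature), not a consequence of Stirling alone.

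Second, and more seriously, your claim that $\Phi(s)=(N_\rho/N_\pi)^{s/2}R(s)$ is entire is unjustified. From $\Phi(s)=L^*(s,\rho)/L^*(s,\pi)$, poles of $\Phi$ arise not only from the (controllable) poles of $L^*(s,\rho)$ but also from \emph{zeros} of $L^*(s,\pi)$ in the critical strip, over which you have no control. A pole of $R(s)$ lies on a fixed vertical line and recurs with period $2\pi/\log q_v$ in $\mathrm{Im}(s)$; nothing in your argument excludes $L^*(s,\pi)$ vanishing along such a progression. Without entireness the final step, forcing each local ratio to be $1$, collapses. The twisting argument avoids both problems precisely because it makes the bad local factors identically $1$, so one never has to separate archimedean from finite contributions or invoke anything about zeros.
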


The proof follows from an argument due to Deligne and Serre \cite{DS}, and the details are
given in \cite[Appendix A]{martin}.  The idea is to twist by a highly ramified character $\chi$ 
at bad places, which makes the $L$-factors 1 at these places so we get a global
equality $L(s, \rho \otimes \chi) = L(s, \pi \otimes \chi)$.  We may take $\chi$ to be trivial at each
archimedean place, and then comparing poles in functional equations allows us to deduce
$L_\infty(s,\rho) = L_\infty(s, \pi)$.  Now we repeat the argument with $\chi$ which is highly ramified
at all but one bad place $v$, where $\chi$ is trivial.  This gives $L(s, \rho_v) = L(s,\pi_v)$.

In fact, in \cite{MR}, when $n=2$ we show the stronger statement that $\rho$ and $\pi$ correspond
via local Langlands at all (finite and infinite) places.  However, this argument relies on the fact that 
the local Langlands correspondence is characterized by twists of $L$- and $\epsilon$- factors
by characters, which is not true for $n \ge 4$ (see
\cite[Remark 7.5.4]{JPSS:GL3} for an example with $n=4$),  so this argument does not generalize
to arbitrary $n$.

\medskip
Now we can ask, to compare $\rho$ and $\pi$, how large of a set of places do we need to deduce
$L(s,\rho) = L(s, \pi)$?
The above proposition says it suffices to compare them at almost all places.
But since $\rho$ and $\pi$ should be determined by their local $L$-factors outside any set $S$
of places of density less then $\frac 1{2n^2}$, to show they correspond it should suffice to
check matching of local $L$-factors outside such a set $S$.  Precisely, we have

\begin{conj} If $L(s,\rho_v) = L(s,\pi_v)$ for all $v$ outside of some set $S$ of places of
density less than $\frac 1{2n^2}$, then $L(s, \rho) = L(s, \pi)$.
\label{conj:new}
\end{conj}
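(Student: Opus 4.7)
The plan is to follow Ramakrishnan's proof of refined SMO (Theorem \ref{thm:dinSMO}), with the Artin representation $\rho$ playing the role of one of the two cuspidal automorphic representations. First I would reduce, via the Deligne--Serre type proposition stated just before the conjecture, to showing $L(s,\rho_v) = L(s,\pi_v)$ on a cofinite set of places. Arguing by contradiction, suppose $L(s,\rho) \ne L(s,\pi)$ as Euler products; since the hypothesis forces agreement off $S$, this amounts to saying $\rho$ and $\pi$ do not correspond globally.

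Form the auxiliary ratio
\[
Z(s) \;=\; \frac{L(s, \rho \times \bar\rho)\, L(s, \pi \times \bar\pi)}{L(s, \rho \times \bar\pi)\, L(s, \pi \times \bar\rho)},
\]
where the mixed Artin--automorphic factors are defined via Brauer induction: writing $\rho = \sum n_i \mathrm{Ind}_{H_i}^{\Gamma_F}(\chi_i)$ expresses $L(s, \rho \times \bar\pi)$ as a virtual product of standard automorphic $L$-functions of twisted base changes of $\pi$. By irreducibility of $\rho$ and cuspidality of $\pi$, each numerator factor has a simple pole at $s=1$; under the contradiction hypothesis, each denominator factor is holomorphic there, so $Z(s)$ has a pole of order $2$ at $s=1$. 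The local equality $L(s,\rho_v) = L(s,\pi_v)$ off $S$ forces $Z_v(s) = 1$ for $v \notin S$, so $Z(s) = Z_S(s)$.

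Rewriting as in the proof of Theorem \ref{thm:dinSMO},
\[
Z_S(s) = \frac{L_S(s, \rho \times \bar\rho)^2\, L_S(s, \pi \times \bar\pi)^2}{D_S(s)},
\]
with $D_S(s)$ the product of all four Rankin--Selberg $S$-factors, one checks $D_S(s)$ is of positive type; Landau's lemma then forces $D_S(s)$ to be nonvanishing at $s=1$, so the double pole of $Z_S(s)$ must be supplied by the numerator. Since $\rho$ is automatically tempered, $|\tr \rho(\Fr_v)|^2 \le n^2$ at every unramified $v$, and the computation in \eqref{eq:Lpipi-tb}, together with $\sum_v q_v^{-s} = \log \frac{1}{s-1} + O(1)$, gives
\[
\log L_S(s, \rho \times \bar\rho) \;\le\; n^2\,\delta(S)\,\log \tfrac{1}{s-1} + o\bigl(\log \tfrac{1}{s-1}\bigr).
\]
Were $\pi$ tempered as well, the analogous bound for $L_S(s, \pi \times \bar\pi)$ would yield a total contribution of at most $2 n^2 \delta(S) \log \frac{1}{s-1} < \log \frac{1}{s-1}$ by hypothesis, contradicting the double pole.

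The hard part is the non-tempered case for $\pi$, exactly as in the proof of Theorem \ref{thm:dinSMO} and in Conjecture \ref{conj:din}. For $n = 2$, the plan is to adapt Ramakrishnan's analysis of $L(s, \Ad \pi)$ and $L(s, \Ad \pi \times \Ad \pi)$; the simplification relative to the two-$\pi$ case is that the $\rho$-half of $Z_S(s)$ is already fully under control, so only the $\pi$-half needs serious work. For general $n$, the Luo--Rudnick--Sarnak bound $|\alpha_{v,i}(\pi)| \le q_v^{1/2 - 1/(n^2+1)}$ is not sharp enough to close the argument, and this is precisely the obstacle keeping Conjecture \ref{conj:din} open; so Conjecture \ref{conj:new} appears to lie at roughly the same depth. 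A clean conditional route that sidesteps all of this is the strong Artin conjecture for $\rho$: then $\rho$ corresponds to a cuspidal $\pi_\rho$ with $L(s,\pi_{\rho,v}) = L(s,\pi_v)$ off $S$, and Conjecture \ref{conj:din} applied to the pair $(\pi_\rho, \pi)$ concludes.
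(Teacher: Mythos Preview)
This statement is a \emph{conjecture}; the paper does not prove it. What the paper offers is only the conditional observation you give in your final paragraph: assuming the strong Artin conjecture, $\rho$ corresponds to a cuspidal $\pi(\rho)$, and then Conjecture~\ref{conj:din} applied to $(\pi(\rho),\pi)$ yields the conclusion. (The paper also notes the reverse conditional route via Proposition~\ref{prop:gal-SMO} when $\pi$ is already known to be of Galois type.) So your last sentence \emph{is} the paper's argument, and that part is fine.

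Your attempted direct, unconditional approach, however, has a genuine gap. The object $L(s,\rho\times\bar\pi)$ you build via Brauer induction is only a \emph{virtual} product of automorphic $L$-functions (the $n_i$ can be negative), so its behavior at $s=1$ is not under control: zeros of the factors appearing with negative exponent could produce poles, and conversely nothing forces holomorphy there when $\rho$ and $\pi$ fail to correspond. Your assertion ``under the contradiction hypothesis, each denominator factor is holomorphic there'' is therefore unjustified---indeed, knowing the pole/nonpole dichotomy for $L(s,\rho\times\bar\pi)$ at $s=1$ is essentially tantamount to modularity of $\rho$, which is exactly what one is trying to avoid assuming. Without that, the claim that $Z(s)$ has a double pole at $s=1$ collapses, and the rest of the argument does not get started.

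Even in the $n=2$ case, your plan to ``adapt Ramakrishnan's analysis of $L(s,\Ad\pi)$'' would, if it worked, give an unconditional proof of the conjecture for $n=2$. The paper explicitly notes this is \emph{not} known: it is established only when $\rho$ is modular (Langlands--Tunnell, Khare--Wintenberger) or when $\pi$ is of Artin type (Wiles), and the even nonsolvable case remains open. So the direct route is not merely ``hard''---it is currently out of reach, and the conditional derivation is all the paper claims.
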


This is true for $n=1$ by class field theory.  In general, this is a consequence of the
strong Artin conjecture together with the refined SMO Conjecture (Conjecture \ref{conj:din}).
Namely, if $\rho$ is modular and its $L$-function agrees with that of $\pi$ outside of $S$, then
$L(s,\pi_v) = L(s, \pi(\rho)_v)$ for $v \not \in S$.  By Conjecture \ref{conj:din}, if 
$\delta(S) < \frac 1{n^2}$, then $\pi \simeq \pi(\rho)$.

Consequently, by Theorem \ref{thm:dinSMO}, we know this conjecture is true whenever $n=2$
and $\rho$ is modular.  This is the case if $\rho$ has solvable image by the 
Langlands--Tunnell theorem, or if $\rho$ is odd and $F= \Q$ by Khare and Winterberger's work
on Serre's conjecture (see \cite{khare}).
Arguing similarly in the reverse direction, Conjecture \ref{conj:new} is true whenever $\pi$
corresponds to some Artin representation $\rho(\pi)$ by Proposition \ref{prop:gal-SMO}.
This is known if $\pi$ corresponds to a weight 1 Hilbert modular form by Wiles \cite{wiles}.
However, even for $n=2$, this is not solved completely, and the case where $\rho$ is even (so
$\pi$ should correspond to a Maass form, say, if $F=\Q$) with nonsolvable image seems particularly difficult.

In any case, one might hope that if one could prove this conjecture independent of modularity,
then this may help establish new cases of modularity.  Recently, Ramakrishnan and I proved
the following mild result towards this conjecture.

\begin{theorem}[\cite{MR}] Suppose $n=2$ and $F$ is cyclic extension of prime degree $p$ of some number field $k$.  Let $S \subset \Sigma_F$ be a set of primes such that almost all $v \in S$
are inert over $k$.  Then $L(s, \rho_v) = L(s, \pi_v)$ for $v \not \in S$ implies $L(s, \rho) = L(s, \pi)$,
and in fact $\rho_v \leftrightarrow \pi_v$ in the sense of local Langlands at all $v$.
\end{theorem}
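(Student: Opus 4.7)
The plan is to imitate Ramakrishnan's proof of Theorem \ref{thm:dinSMO}, trading one of the cuspidal representations for the Artin representation $\rho$, and exploiting the cyclic structure of $F/k$ through the Key Observation in place of the density-$<\tfrac18$ hypothesis.

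After enlarging $S$ by the finitely many ramified places of $\rho$ and $\pi$, together with the finitely many split-over-$k$ primes in $S$ (so that every $v\in S$ is now inert over $k$), I would form the ratio
\[
Z(s) \;=\; \frac{L(s,\pi\times\bar\pi)\, L(s,\rho\otimes\bar\rho)}{L(s,\pi\times\bar\rho)\, L(s,\rho\otimes\bar\pi)}.
\]
For $v\not\in S$ the matching Satake parameters of $\rho_v$ and $\pi_v$ force all four local Rankin-Selberg factors to coincide, so $Z_v(s)=1$ and hence $Z(s)=Z_S(s)$. Assuming for contradiction that $\rho \not\leftrightarrow \pi$, the numerator has a double pole at $s=1$: a simple pole of $L(s,\pi\times\bar\pi)$ from the cuspidality of $\pi$ (Jacquet--Shalika) and a simple pole of $L(s,\rho\otimes\bar\rho)=\zeta_F(s)L(s,\Ad\rho)$ from $\zeta_F$. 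The denominator is holomorphic nonvanishing at $s=1$ since no correspondence occurs. So $Z(s)$ has a pole of order~$2$ at $s=1$.

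Next I would bound $Z_S(s)$ via the Key Observation. By Kim--Sarnak $|\alpha_{v,i}^\pi|<q_v^{7/64}$, while the Satake parameters of the Artin $\rho_v$ have absolute value $\le 1$. For $v\in S$ inert over $k$ we have $q_v\ge p_v^p$, so the Key Observation yields absolute convergence---hence holomorphy and nonvanishing at $s=1$---of each of $L_S(s,\pi\times\bar\pi)$, $L_S(s,\rho\otimes\bar\rho)$, $L_S(s,\pi\times\bar\rho)$, and $L_S(s,\rho\otimes\bar\pi)$ in the half plane $\Ree(s)>\delta+\tfrac{1}{p}$, where $\delta$ equals $\tfrac{7}{32},\,0,\,\tfrac{7}{64}$, and $\tfrac{7}{64}$ respectively. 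Even in the worst case $p=2$ each of these thresholds is strictly below $1$, so $Z_S(s)$ is analytic and nonzero at $s=1$, contradicting the double pole of $Z$ just computed. Once $\rho_v$ and $\pi_v$ are known to correspond at almost every $v$, the Deligne--Serre type proposition above upgrades this to $L(s,\rho)=L(s,\pi)$ and to the local Langlands compatibility $\rho_v\leftrightarrow\pi_v$ at every place.

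The main obstacle is to give the cross-type Rankin--Selberg $L$-functions $L(s,\pi\times\bar\rho)$ and $L(s,\rho\otimes\bar\pi)$ a global meromorphic meaning with reliable pole behavior at $s=1$ without a priori assuming that $\rho$ is modular. When $\rho$ has solvable (cyclic, dihedral, tetrahedral, or octahedral) image, Langlands--Tunnell supplies $\pi(\rho)$ and the cross factors are ordinary $\GL_2\times\GL_2$ Rankin--Selberg, so the argument is essentially cosmetic. The genuinely subtle case is icosahedral $\rho$: here one must invoke Brauer's theorem to write $\rho$ as a virtual sum $\sum n_i\,\mathrm{Ind}_{H_i}^G\chi_i$ over nilpotent-by-cyclic subgroups $H_i$, interpret each resulting piece via Arthur--Clozel solvable cyclic base change and automorphic induction along intermediate fields of the $A_5$-extension, and verify that the pole/zero bookkeeping, the identity $Z(s)=Z_S(s)$, and the final ``correspondence $\Leftrightarrow$ pole at $s=1$'' dichotomy all survive this decomposition.
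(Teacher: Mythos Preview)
Your approach differs substantially from the paper's, and the gap you yourself flag in the final paragraph is fatal rather than merely technical.  The entire content of the theorem lies in the icosahedral case (for solvable image, Langlands--Tunnell plus refined SMO already gives the result, since $\delta(S)=0$), and there your contradiction hinges on the claim that $L(s,\pi\times\bar\rho)L(s,\rho\otimes\bar\pi)$ is holomorphic and nonvanishing at $s=1$ whenever $\rho\not\leftrightarrow\pi$.  But absent modularity of $\rho$, these cross $L$-functions have no a priori meromorphic continuation, and your Brauer remedy does not work: writing $\rho=\sum n_i\,\mathrm{Ind}_{K_i}^F\chi_i$, you would need to interpret each $L(s,\pi\times\mathrm{Ind}_{K_i}^F\bar\chi_i)$ as $L(s,\pi_{K_i}\otimes\bar\chi_i)$, which requires base change of $\pi$ along $K_i/F$.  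For $G$ with projective image $A_5$, the subfields $K_i$ fixed by elementary subgroups give extensions $K_i/F$ that are neither Galois nor solvable, so no base change is available.  Even granting meromorphy, the presence of negative $n_i$ in the Brauer decomposition means poles and zeros of the factors can cancel, so the ``pole at $s=1$ iff correspondence'' dichotomy is not inherited.  In effect, your argument shows only that $L(s,\pi\times\bar\rho)L(s,\rho\otimes\bar\pi)$ has a double pole at $s=1$, which is consistent with the desired conclusion but does not prove it.

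The paper's proof avoids the cross $L$-functions entirely.  It compares only $L(s,\pi\times\bar\pi)$ with $L(s,\rho\otimes\bar\rho)$, both of which have known analytic properties and functional equations, and uses the Key Observation (plus passing to a carefully chosen solvable extension $K/F$ when $p$ is small) first to force $\pi$ to be tempered everywhere, and then to show that $L(s,\rho_K\otimes\chi)$ is entire for all finite-order $\chi$.  This feeds directly into the $\GL(2)$ converse theorem, yielding modularity of $\rho_K$; refined SMO then matches $\pi_K$ with $\pi(\rho_K)$, and a descent argument varying $K$ recovers the conclusion over $F$.  The converse theorem is the genuine replacement for the missing Rankin--Selberg theory in your approach.
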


\begin{proof}[Proof] By the generalization of the Deligne--Serre argument we mentioned above,
it suffices to show $L(s, \rho_v) = L(s, \pi_v)$ for almost all $v$.
We show this in 4 steps.  

\medskip
{\it Step 1:} Show $\pi$ is tempered (at each place).  

\medskip
It is immediate from the equality $L(s, \rho_v) = L(s, \pi_v)$ that $\pi_v$ is tempered for any
$v \not \in S$, so we just need to show temperedness at $v \in S$.
Note, from the Fact before Theorem \ref{thm:SMO}, 
$\pi_v$ is tempered if and only if $L(s, \pi_v \times \bar \pi_v)$
has no pole in $\Ree(s) > 0$.  
  Now consider the ratio
\[ \Lambda(s) = \Lambda_F(s) = \frac{L^*(s, \pi \times \bar \pi)}{L^*(s, \rho \times \bar \rho)} =
\frac{L_S(s, \pi \times \bar \pi)}{L_S(s, \rho \times \bar \rho)}. \]
Then $\Lambda(s)$ satisfies a functional equation with $\Lambda(1-s)$, and if we can show
$\Lambda$ has no poles in $\Ree(s) \ge \frac 12$, 
this will mean it is entire by the functional equation.

Take $\delta < \frac 14$ such that the bound of $q_v^{\delta}$ towards GRC is satisfied.  The Key Observation implies that $L_S(\pi \times \bar \pi)$ has no poles in $\Ree(s) > 2\delta + \frac 1p$
and $L_S(\rho \times \bar \rho)$ has no poles (and thus no zeroes by Landau's lemma) in 
$\Ree(s) > \frac 1p$.  For $p$ sufficiently large, this means $\Lambda$ has no poles in 
$\Ree(s) \ge \frac 12$, so $\Lambda$ is entire.  For small $p$, we use Lemma \ref{lem:din}
to pass to an extension $K$ to push our bounds on the poles of the numerator and denominator
to the left of $\Ree(s) = \frac 12$, and get that an analogous ratio $\Lambda_K$ is entire.
In either case, write the ratio as $\Lambda_K$, where we take $K=F$ if $p$ is sufficiently large.

If $\pi_v$ is not tempered for some $v \in S$, then Landau's lemma implies 
$L_{T}(s, \pi_K \times \bar \pi_K)$ has a pole
at some $s_0 > 0$, where $T$ is the set of primes of $K$ above $S$. 
But for $\Lambda_K$ to be entire, we also need a pole at $s_0$ for
 $L(s, \rho_K \times \bar \rho_K)$.  By taking $K$ larger if needed, we can make it so that
 $L(s, \rho_K \times \bar \rho_K)$ has no poles in $\Ree(s) > \frac 1{p^m} < s_0$, a contradiction.
 
\medskip
{\it Step 2:} Show $L(s, \rho_K)$, for some finite solvable 
extension $K/F$, is entire.

\medskip
Here we consider the ratio
\[ \Lambda_K(s) = \frac{L^*(s, \rho_K)}{L^*(s, \pi_K)} = \frac{L_T(s, \rho_K)}{L_T(s, \pi_K)}, \]
where as before $T$ is the set of places of $K$ above $S$.
Again, by looking at a functional equation, it suffices to show $L_T(s, \rho_K)$ has no pole in
 $\Ree(s) \ge \frac 12$.  The bound from the Key Observation is that $L_S(s, \rho)$ has no
 pole in $\Ree(s) > \frac 1p$, so can take $K=F$ unless $p=2$.  In this case we need to pass
 to an extension $K$, so the bound becomes $\Ree(s) > \frac 1{p^2}$.  By a refinement of 
 Lemma \ref{lem:din}, we can do this with $K/F$ quadratic or biquadratic, according to whether
 $\sqrt{-1} \in F$ or not.

\medskip
{\it Step 3:} Deduce $L(s, \rho_K) = L(s, \pi_K)$.

\medskip
The point is, up until now, everything we did is valid for twists, and the choice of $K$ in the
previous step only depends on $F/k$.  So we get that $L(s, \rho_K \otimes \chi)$ is entire for
any finite order idele class character of $K$.  For Artin representations, it is known that this is
sufficient to use the $\GL(2)$ converse theorem, namely one gets boundedness in vertical strips
for free.  Thus $\rho_K$ corresponds to an automorphic representation $\Pi$ of $\GL_2(\A_K)$,
which must have the same $L$-factors as $\pi_K$ at all places not above a place in $S$, i.e.,
at all places outside a density 0 set.  By refined SMO (Theorem \ref{thm:dinSMO}), this means
$\pi_K \simeq \Pi$, so $L(s, \rho_K) = L(s, \pi_K)$.

\medskip
{\it Step 4:} Descend the previous step to $F$, i.e., show $L(s, \rho) = L(s, \pi)$.

\medskip
If $p>2$, then $K=F$ so there is nothing to do.  Assume $p=2$.  I will just discuss the proof
in the simpler case that $K/F$ is quadratic (i.e., when $\sqrt{-1} \in F$), 
and refer to \cite{MR} for the biquadratic case.
Remember, it suffices to show for almost all $v \in S$ that
$L(s, \rho_v) = L(s, \pi_v)$.  Fix any place $v \in S$ such that $\rho_v$ and $\pi_v$ are
unramified, and let $w$ be a place above $v$, which will be inert.  By the previous step, we know
$\rho_{K,w} \leftrightarrow \pi_{K,w}$ (in the sense of local Langlands, since unramified 
representations are determined by their local $L$-factors).  
This means that $\rho_v$ must correspond to either $\pi_v$ or $\pi_v \otimes \mu$, 
where $\mu$ is the quadratic character associated $K_w/F_v$.  Now the point is that we have
sufficient flexibility in our choice of $K$ so that we can get the correspondence 
$\rho_{K,w} \leftrightarrow \pi_{K,w}$ both when $K_w/F_v$ is ramified.
But it is impossible for an unramified $\rho_v$ to correspond to a ramified twist of the
unramified $\pi_v$, so we must have $\rho_v \leftrightarrow \pi_v$, and we are done.
\end{proof}

Finally we remark that a similar conjecture should be true for (families of compatible)
$\ell$-adic Galois representations.  In order for the proof of the above theorem to go through for
$\ell$-adic Galois representations, first we would need to know a purity result (which is known
in some cases), such as $L(s,\rho_v)$
has no poles in $\Ree(s) > 0$, to conclude temperedness of $\pi$.  Then we would need to know
that entirety of the twists $L(s, \rho \otimes \chi)$ also implies boundedness in vertical strips, so we
can use the converse theorem in Step 3.  (For Artin representations, this follows from a theorem of Brauer
which tells us Artin $L$-functions are quotients of products of degree 1 $L$-functions.)

\begin{bibdiv}
\begin{biblist}

\bib{AC}{book}{
   author={Arthur, James},
   author={Clozel, Laurent},
   title={Simple algebras, base change, and the advanced theory of the trace
   formula},
   series={Annals of Mathematics Studies},
   volume={120},
   publisher={Princeton University Press, Princeton, NJ},
   date={1989},
   pages={xiv+230},
   isbn={0-691-08517-X},
   isbn={0-691-08518-8},
   review={\MR{1007299 (90m:22041)}},
}

\bib{BB}{article}{
   author={Blomer, Valentin},
   author={Brumley, Farrell},
   title={On the Ramanujan conjecture over number fields},
   journal={Ann. of Math. (2)},
   volume={174},
   date={2011},
   number={1},
   pages={581--605},
   issn={0003-486X},
   review={\MR{2811610}},
   doi={10.4007/annals.2011.174.1.18},
}

\bib{clozel}{article}{
   author={Clozel, Laurent},
   title={Purity reigns supreme},
   journal={Int. Math. Res. Not. IMRN},
   date={2013},
   number={2},
   pages={328--346},
   issn={1073-7928},
   review={\MR{3010691}},
}

\bib{DS}{article}{
   author={Deligne, Pierre},
   author={Serre, Jean-Pierre},
   title={Formes modulaires de poids $1$},
   language={French},
   journal={Ann. Sci. \'Ecole Norm. Sup. (4)},
   volume={7},
   date={1974},
   pages={507--530 (1975)},
   issn={0012-9593},
   review={\MR{0379379 (52 \#284)}},
}

\bib{FMW}{article}{
   author={Feigon, Brooke},
   author={Martin, Kimball},
   author={Whitehouse, David},
   title={Periods and nonvanishing of central $L$-values for $\mathrm{GL}(2n)$},
   eprint={arXiv:1308.2253},  % e.g., arXiv info if desired
   status={preprint},
   date={2014},
   label={FMW},
}

\bib{GJ}{article}{
   author={Gelbart, Stephen},
   author={Jacquet, Herv{\'e}},
   title={A relation between automorphic representations of ${\rm GL}(2)$
   and ${\rm GL}(3)$},
   journal={Ann. Sci. \'Ecole Norm. Sup. (4)},
   volume={11},
   date={1978},
   number={4},
   pages={471--542},
   issn={0012-9593},
   review={\MR{533066 (81e:10025)}},
}

\bib{JL}{book}{
    author = {Jacquet, H},
    author = {Langlands, R. P.},
     title = {Automorphic forms on {${\rm GL}(2)$}},
    series = {Lecture Notes in Mathematics},
    volume={114},
    publisher = {Springer-Verlag, Berlin-New York},
      date = {1970},
     pages = {vii+548},
    review={\MR{0401654 (70)}},
}

\bib{JPSS:GL3}{article}{
   author={Jacquet, Herv{\'e}},
   author={Piatetski-Shapiro, Ilja Iosifovitch},
   author={Shalika, Joseph},
   title={Automorphic forms on ${\rm GL}(3)$. I},
   journal={Ann. of Math. (2)},
   volume={109},
   date={1979},
   number={1},
   pages={169--212},
   issn={0003-486X},
   review={\MR{519356 (80i:10034a)}},
   doi={10.2307/1971270},
}

\bib{JS}{article}{
   author={Jacquet, H.},
   author={Shalika, J. A.},
   title={On Euler products and the classification of automorphic
   representations. I},
   journal={Amer. J. Math.},
   volume={103},
   date={1981},
   number={3},
   pages={499--558},
   issn={0002-9327},
   review={\MR{618323 (82m:10050a)}},
   doi={10.2307/2374103},
}

\bib{khare}{article}{
   author={Khare, Chandrashekhar},
   title={Serre's conjecture and its consequences},
   journal={Jpn. J. Math.},
   volume={5},
   date={2010},
   number={1},
   pages={103--125},
   issn={0289-2316},
   review={\MR{2609324 (2011d:11121)}},
   doi={10.1007/s11537-010-0946-5},
}

\bib{KSa}{article}{
   author={Kim, Henry H.},
   title={Functoriality for the exterior square of ${\rm GL}_4$ and the
   symmetric fourth of ${\rm GL}_2$},
   note={With appendix 1 by Dinakar Ramakrishnan and appendix 2 by Kim and
   Peter Sarnak},
   journal={J. Amer. Math. Soc.},
   volume={16},
   date={2003},
   number={1},
   pages={139--183},
   issn={0894-0347},
   review={\MR{1937203 (2003k:11083)}},
   doi={10.1090/S0894-0347-02-00410-1},
}

\bib{KSh}{article}{
   author={Kim, Henry H.},
   author={Shahidi, Freydoon},
   title={Cuspidality of symmetric powers with applications},
   journal={Duke Math. J.},
   volume={112},
   date={2002},
   number={1},
   pages={177--197},
   issn={0012-7094},
   review={\MR{1890650 (2003a:11057)}},
   doi={10.1215/S0012-9074-02-11215-0},
}

\bib{LRS}{article}{
   author={Luo, Wenzhi},
   author={Rudnick, Ze{\'e}v},
   author={Sarnak, Peter},
   title={On the generalized Ramanujan conjecture for ${\rm GL}(n)$},
   conference={
      title={Automorphic forms, automorphic representations, and arithmetic},
      address={Fort Worth, TX},
      date={1996},
   },
   book={
      series={Proc. Sympos. Pure Math.},
      volume={66},
      publisher={Amer. Math. Soc., Providence, RI},
   },
   date={1999},
   pages={301--310},
   review={\MR{1703764 (2000e:11072)}},
}

\bib{martin}{book}{
   author={Martin, Kimball},
   title={Four-dimensional Galois representations of solvable type and
   automorphic forms},
   note={Thesis (Ph.D.)--California Institute of Technology},
   publisher={ProQuest LLC, Ann Arbor, MI},
   date={2004},
   pages={77},
   isbn={978-0496-11395-8},
   review={\MR{2706615}},
}

\bib{MR}{article}{
   author={Martin, Kimball},
   author={Ramakrishnan, Dinakar},
   title={A comparison of automorphic and Artin $L$-series of GL(2)-type agreeing at degree one primes},
   eprint={},
   note={},
   status={Contemp.\ Math., to appear},
   date={2015},
   label={MR},
}

\bib{mazur:1993}{article}{
   author={Mazur, B.},
   title={On the passage from local to global in number theory},
   journal={Bull. Amer. Math. Soc. (N.S.)},
   volume={29},
   date={1993},
   number={1},
   pages={14--50},
   issn={0273-0979},
   review={\MR{1202293 (93m:11052)}},
   doi={10.1090/S0273-0979-1993-00414-2},
}

\bib{miyake}{article}{
   author={Miyake, Toshitsune},
   title={On automorphic forms on ${\rm GL}_{2}$ and Hecke operators},
   journal={Ann. of Math. (2)},
   volume={94},
   date={1971},
   pages={174--189},
   issn={0003-486X},
   review={\MR{0299559 (45 \#8607)}},
}

\bib{moreno}{article}{
   author={Moreno, Carlos J.},
   title={Analytic proof of the strong multiplicity one theorem},
   journal={Amer. J. Math.},
   volume={107},
   date={1985},
   number={1},
   pages={163--206},
   issn={0002-9327},
   review={\MR{778093 (86m:22027)}},
   doi={10.2307/2374461},
}

\bib{PS}{article}{
   author={Piatetski-Shapiro, I. I.},
   title={Multiplicity one theorems},
   conference={
      title={Automorphic forms, representations and $L$-functions},
      address={Proc. Sympos. Pure Math., Oregon State Univ., Corvallis,
      Ore.},
      date={1977},
   },
   book={
      series={Proc. Sympos. Pure Math., XXXIII},
      publisher={Amer. Math. Soc., Providence, R.I.},
   },
   date={1979},
   pages={209--212},
   review={\MR{546599 (81m:22027)}},
}

\bib{rajan:l-adic}{article}{
   author={Rajan, C. S.},
   title={On strong multiplicity one for $l$-adic representations},
   journal={Internat. Math. Res. Notices},
   date={1998},
   number={3},
   pages={161--172},
   issn={1073-7928},
   review={\MR{1606395 (99c:11064)}},
   doi={10.1155/S1073792898000142},
}

\bib{rajan:SMO}{article}{
   author={Rajan, C. S.},
   title={On strong multiplicity one for automorphic representations},
   journal={J. Number Theory},
   volume={102},
   date={2003},
   number={1},
   pages={183--190},
   issn={0022-314X},
   review={\MR{1994478 (2004f:11050)}},
   doi={10.1016/S0022-314X(03)00066-0},
}

\bib{ramakrishnan:SMO}{article}{
   author={Ramakrishnan, Dinakar},
   title={A refinement of the strong multiplicity one theorem for ${\rm
   GL}(2)$. Appendix to: ``$l$-adic representations associated to modular
   forms over imaginary quadratic fields. II'' [Invent.\ Math.\ {\bf 116}
   (1994), no.\ 1-3, 619--643; MR1253207 (95h:11050a)] by R. Taylor},
   journal={Invent. Math.},
   volume={116},
   date={1994},
   number={1-3},
   pages={645--649},
   issn={0020-9910},
   review={\MR{1253208 (95h:11050b)}},
   doi={10.1007/BF01231576},
}

\bib{ramakrishnan:motives}{article}{
   author={Ramakrishnan, Dinakar},
   title={Pure motives and automorphic forms},
   conference={
      title={Motives},
      address={Seattle, WA},
      date={1991},
   },
   book={
      series={Proc. Sympos. Pure Math.},
      volume={55},
      publisher={Amer. Math. Soc., Providence, RI},
   },
   date={1994},
   pages={411--446},
   review={\MR{1265561 (94m:11134)}},
}

\bib{ramakrishnan}{article}{
   author={Ramakrishnan, Dinakar},
   title={A mild Tchebotarev theorem for ${\rm GL}(n)$},
   journal={J. Number Theory},
   volume={146},
   date={2015},
   pages={519--533},
   issn={0022-314X},
   review={\MR{3267122}},
   doi={10.1016/j.jnt.2014.08.002},
}

\bib{ramakrishnan:general}{article}{
   author={Ramakrishnan, Dinakar},
   title={A theorem \`a l\`a Tchebotarev for ${\rm GL}(n)$},
   eprint={},
   note={},
   status={preprint},
   date={2012},
   label={Ram},
}

\bib{taylor}{article}{
   author={Taylor, Richard},
   title={$l$-adic representations associated to modular forms over
   imaginary quadratic fields. II},
   journal={Invent. Math.},
   volume={116},
   date={1994},
   number={1-3},
   pages={619--643},
   issn={0020-9910},
   review={\MR{1253207 (95h:11050a)}},
   doi={10.1007/BF01231575},
}

\bib{walji}{article}{
   author={Walji, Nahid},
   title={Further refinement of strong multiplicity one for $\rm GL(2)$},
   journal={Trans. Amer. Math. Soc.},
   volume={366},
   date={2014},
   number={9},
   pages={4987--5007},
   issn={0002-9947},
   review={\MR{3217707}},
   doi={10.1090/S0002-9947-2014-06103-5},
}

\bib{wiles}{article}{
   author={Wiles, A.},
   title={On ordinary $\lambda$-adic representations associated to modular
   forms},
   journal={Invent. Math.},
   volume={94},
   date={1988},
   number={3},
   pages={529--573},
   issn={0020-9910},
   review={\MR{969243 (89j:11051)}},
   doi={10.1007/BF01394275},
}

\bib{zhang}{article}{
   author={Zhang, Wei},
   title={Fourier transform and the global Gan-Gross-Prasad conjecture for
   unitary groups},
   journal={Ann. of Math. (2)},
   volume={180},
   date={2014},
   number={3},
   pages={971--1049},
   issn={0003-486X},
   review={\MR{3245011}},
}

\end{biblist}
\end{bibdiv}
\end{document}